\newtheorem{theorem}{Theorem}[section]
\newtheorem{lemma}[theorem]{Lemma}
\newtheorem*{lemma*}{Lemma}
\newtheorem{proposition}[theorem]{Proposition}
\newtheorem{corollary}[theorem]{Corollary}
\theoremstyle{definition}
\newtheorem{questionstar}[theorem]{Question}
\newtheorem*{question}{Question}
\newtheorem{problem}[theorem]{Problem}
\theoremstyle{remark}
\newtheorem{remark}[theorem]{Remark}
\numberwithin{equation}{section}
\newcommand{\abs}[1]{\lvert#1\rvert}
\newcommand{\norm}[1]{\lVert#1\rVert}
\newcommand{\C}{\mathbb{C}}
\newcommand{\CC}{\mathscr{C}}
\newcommand{\X}{\mathbb{X}}
\newcommand{\Y}{\mathbb{Y}}
\newcommand{\E}{\mathcal{E}}
\newcommand{\EE}{\mathsf{E}}
\newcommand{\R}{\mathbb{R}}
\newcommand{\W}{\mathscr{W}}
\newcommand{\V}{\mathcal{V}}
\newcommand{\const}{\mathrm{const}}
\newcommand{\onto}{\overset{{}_{\textnormal{\tiny{onto}}}}{\longrightarrow}}
\DeclareMathOperator{\diam}{diam}
\DeclareMathOperator{\dist}{dist}
\DeclareMathOperator{\re}{Re}
\DeclareMathOperator{\im}{Im}
\DeclareMathOperator{\Div}{div}
\DeclareMathOperator{\loc}{loc}
\def\XXint#1#2#3{{\setbox0=\hbox{$#1{#2#3}{\int}$}
\vcenter{\hbox{$#2#3$}}\kern-.5\wd0}}
\def\le{\leqslant}
\def\ge{\geqslant}
\begin{document}

\title{Diffeomorphic approximation of Sobolev~homeomorphisms}

\author{Tadeusz Iwaniec}
\address{Department of Mathematics, Syracuse University, Syracuse, NY 13244, USA
and Department of Mathematics and Statistics, University of Helsinki, Finland}
\email{tiwaniec@syr.edu}
\thanks{}

\author{Leonid V. Kovalev}
\address{Department of Mathematics, Syracuse University, Syracuse,
NY 13244, USA}
\email{lvkovale@syr.edu}
\thanks{}

\author{Jani Onninen}
\address{Department of Mathematics, Syracuse University, Syracuse,
NY 13244, USA}
\email{jkonnine@syr.edu}
\thanks{Iwaniec was supported by the NSF grant DMS-0800416 and the Academy of Finland grant 1128331.
Kovalev was supported by the NSF grant DMS-0968756.
Onninen was supported by the NSF grant  DMS-1001620.}

\subjclass[2000]{Primary 46E35; Secondary 30E10, 35J92}


\keywords{Approximation, Sobolev homeomorphism, diffeomorphism, $p$-harmonic}

\begin{abstract}
Every homeomorphism $h \colon \mathbb X \to \mathbb Y$ between planar open sets that belongs to the Sobolev class
$\W^{1,p} (\mathbb X , \mathbb Y)$, $1<p<\infty$, can be approximated in the Sobolev norm by $\CC^\infty$-smooth diffeomorphisms.
\end{abstract}
\maketitle

\section{Introduction}
By the very definition, the Sobolev space $\W^{1,p}(\X ,\R)$, $1 \le p < \infty$, in a domain $\X \subset \R^n$, is the completion of $\CC^\infty$-smooth real functions having finite Sobolev norm
\[\norm u_{\W^{1,p}(\X)} = \norm{u}_{\mathscr L^p (\X)} + \norm{\nabla u}_{\mathscr L^p (\X)}<\infty.  \]
The question of smooth approximation becomes more intricate for Sobolev mappings, whose target is not a linear space, say a smooth manifold~\cite{Be, HIMO, HL1, HL2} or even for mappings between open subsets $\X, \Y$ of the Euclidean space $\R^n$. If a given homeomorphism $h \colon \X \onto \Y$ is in the Sobolev class $\W^{1,p}(\X, \Y)$ it is not obvious at all as to whether one can preserve injectivity property of the $\CC^\infty$-smooth approximating mappings. It is rather surprising that this question remained unanswered after the global invertibility of Sobolev mappings became an issue in nonlinear elasticity~\cite{Ba0, FG, MST, Sv}.  It was formulated and promoted by John M. Ball in the following form.

\begin{question}\cite{Ba, Ba2}
If $h \in \W^{1,p}(\X, \R^n)$ is invertible, can $h$ be approximated in $ \W^{1,p}$ by piecewise affine invertible mappings?
\end{question}

J. Ball attributes this question to L.C. Evans and points out its relevance to the  regularity of minimizers of neohookean energy functionals~\cite{Ba1, BPO1, CL, Ev, SiSp}. Partial results toward the Ball-Evans problem were obtained in~\cite{Mo} (for planar bi-Sobolev mappings that are smooth outside of a finite set) and in~\cite{BM} (for planar bi-H\"older mappings, with approximation in the H\"older norm).
The articles~\cite{Ba,SS} illustrate the difficulty of preserving invertibility in the  approximation process.
In~\cite{IKOhopf} we provided an affirmative answer to the Ball-Evans question in the planar case when $p=2$.
In the present paper we extend the result of~\cite{IKOhopf} to all Sobolev classes $\W^{1,p}(\X , \Y)$ with $1<p< \infty$. The case $p=1$ still remains open.

Let $\X$ be a nonempty open set in $\R^2$. We study complex-valued functions $h=u+iv \colon \X \to \C \simeq \R^2$ of Sobolev class $\W^{1,p} (\X, \C)$, $1<p<\infty$. Their real and imaginary part have well defined gradient in $\mathscr L^p (\X, \R^2)$
\[\nabla u \colon \X \to \R^2 \quad \mbox{ and } \quad \nabla v \colon \X \to \R^2.\]
Then we introduce the gradient mapping of $h$, by setting
\begin{equation}\label{grad}
\nabla h = (\nabla u , \nabla v) \colon \X \to \R^2 \times \R^2.
\end{equation}
The $\mathscr L^p$-norm of the gradient mapping  and the $p$-energy
 of $h$ are defined by
 \begin{equation}\label{penergy}
 \norm{\nabla h}_{\mathscr L^p (\X)} = \left[ \int_{\X} \left(\abs{\nabla u}^p + \abs{\nabla v}^p\right) \right]^\frac{1}{p}, \quad \EE_{\X}[h]=\EE_{\X}^p[h]= \norm{\nabla h}^p_{\mathscr L^p (\X)}.
 \end{equation}
The reader may wish to notice that this norm is slightly different from what can be found in other texts in which the authors use the differential matrix of $h$ instead of the gradient mapping, so
\begin{equation}\label{penergy2}
\norm{Dh}_{\mathscr L^p(\X)} = \left[ \int_{\X} \left(\abs{\nabla u}^2 + \abs{\nabla v}^2\right)^\frac{p}{2} \right]^\frac{1}{p}.
\end{equation}
Thus our approach involves \emph{coordinate-wise} $p$-harmonic mappings, which we still call $p$-harmonic for the sake of brevity.
We shall take an advantage of the gradient mapping on numerous occasions, by exploring the associated {\it uncoupled} system of real $p$-harmonic equations for mappings with smallest $p$-energy. Our  theorem reads as follows.

\begin{theorem}\label{thmmain}
Let $h \colon \X \onto \Y$ be an orientation-preserving homeomorphism in the Sobolev space $\W^{1,p}_{\loc}(\X, \Y)$,
$1<p<\infty$, defined for open sets $\X, \Y \subset \R^2$. Then there exist $\CC^\infty$-diffeomorphisms $h_\ell  \colon \X
\onto \Y$, $\ell=1,2,\dots$ such that
\begin{enumerate}[(i)]
\item $h_\ell -h\in \W^{1,p}_\circ (\X, \R^2)$, $\ell=1,2, \dots$\vskip0.15cm
\item $\lim\limits_{\ell \to \infty} (h_\ell -h)=0$, uniformly on $\X$\vskip0.15cm
\item $\lim\limits_{\ell \to \infty} \norm{\nabla h_\ell -\nabla h}_{\mathscr L^p (\X)}= 0$\vskip0.15cm
\item $\norm{\nabla h_\ell}_{\mathscr L^p (\X)} \le \norm{\nabla h}_{\mathscr L^p (\X)}$, for $\ell =1,2, \dots$ \vskip0.15cm
\item If $h$ is a $\CC^\infty$-diffeomorphism outside of a compact subset of $\X$, then there is a compact subset of $\X$ outside which  $h_\ell \equiv h$, for all $\ell = 1,2, \dots$
\end{enumerate}
\end{theorem}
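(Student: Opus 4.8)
The plan is to manufacture $h_\ell$ by a \emph{cell-by-cell energy-minimizing replacement}: on a fine mesh I overwrite $h$ on each cell by the coordinate-wise $p$-harmonic map carrying the same boundary values, and then smooth across the mesh skeleton. I would first reduce to a convenient setting. Exhausting $\X$ by relatively compact open sets and using that $\abs{\nabla h}^p$ is integrable, it suffices to perform the surgery on a bounded piece while interpolating the Sobolev trace of $h$ on the outer cells, which is what makes $h_\ell-h$ land in $\W^{1,p}_\circ(\X,\R^2)$, giving (i); the uniform continuity of $h$ on the piece will be used repeatedly. In this language (v) is transparent --- where $h$ is already a smooth diffeomorphism I simply refrain from touching it --- so the content is the general, everywhere-rough case.

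For the construction proper, fix a square (or triangular) grid $\mathcal T$ of mesh $\delta\to 0$, positioned so that by Fubini the restriction $h|_{\partial T}$ is, for a.e.\ placement, absolutely continuous and a homeomorphism of $\partial T$ onto a small Jordan curve. Define $H$ on $T$ to be the coordinate-wise $p$-harmonic extension of $h|_{\partial T}$, i.e.\ the unique minimizer of $\EE_T[\cdot]$ among maps with that trace, so that $u,v$ solve the \emph{uncoupled} equations $\Div(\abs{\nabla u}^{p-2}\nabla u)=0$ and $\Div(\abs{\nabla v}^{p-2}\nabla v)=0$. Since $h|_T$ is itself an admissible competitor, minimization yields $\EE_T[H]\le\EE_T[h]$ on every cell, and summation gives $\EE_\X[h_\ell]\le\EE_\X[h]$ with no error term --- this is precisely why the coordinate-wise energy \eqref{penergy}, rather than $\norm{Dh}_{\mathscr L^p}$, is the functional to minimize, and it delivers the monotonicity (iv) essentially for free once the final smoothing is arranged to respect it. The fill-ins agree with $h$, hence with one another, along shared edges, so they patch to a continuous $\W^{1,p}$ map; the scalar maximum principle confines $H(T)$ to the convex hull of $h(\partial T)$, whose diameter is controlled by $\operatorname{osc}_{\partial T}h\to 0$, which forces $h_\ell\to h$ uniformly and settles (ii).

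The hard part, and the reason the Ball--Evans problem resisted for so long, is to make each $H$ a \emph{diffeomorphism} and to keep global injectivity through the skeleton-smoothing. The model tool is the Rad\'o--Kneser--Choquet principle: the ($p$-)harmonic extension of a homeomorphism of $\partial T$ onto the boundary of a \emph{convex} region is an orientation-preserving diffeomorphism of $T$. The obstruction is that the image cell $h(\partial T)$ need not bound a convex set; one must either adapt the decomposition so that the relevant image cells are convex, or prove a quantitative univalence criterion for coordinate-wise $p$-harmonic maps that tolerates the small, mesh-controlled deviation of $h|_{\partial T}$ from its affine model $x\mapsto h(x_0)+Dh(x_0)(x-x_0)$ --- whose boundary image is a parallelogram --- while estimating the total measure of the \emph{bad} cells on which $Dh$ is ill-behaved. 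Threading injectivity through the final mollification near edges and vertices (where the glued map is only $\W^{1,p}$, not $\CC^1$) is the genuine crux; here the $\CC^{1,\alpha}$-regularity of planar $p$-harmonic functions away from their isolated critical points, together with the convexity just arranged, must be used quantitatively. The $p=2$ instance of this step is carried out in~\cite{IKOhopf}, and the present task is to install its $p$-harmonic replacement.

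Finally, (iii) follows almost formally from (ii) and (iv): the uniform bound $\norm{\nabla h_\ell}_{\mathscr L^p}\le\norm{\nabla h}_{\mathscr L^p}$ gives a weakly convergent subsequence whose limit is identified as $\nabla h$ through the uniform convergence (ii); lower semicontinuity then forces $\norm{\nabla h_\ell}_{\mathscr L^p}\to\norm{\nabla h}_{\mathscr L^p}$, and since $\mathscr L^p$ is uniformly convex for $1<p<\infty$, weak convergence together with convergence of norms upgrades to strong convergence $\nabla h_\ell\to\nabla h$ in $\mathscr L^p(\X)$. I therefore expect the entire difficulty to concentrate in the injectivity-preserving local replacement of the third paragraph, with the remaining items (i), (ii), (iv), (v) following from the energy-minimizing design and a routine exhaustion.
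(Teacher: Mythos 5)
Your outline correctly identifies the architecture that the paper actually uses---coordinate-wise $p$-harmonic replacement, an RKC-type univalence theorem, Munkres-style smoothing across the skeleton---and your Radon--Riesz derivation of (iii) from (i), (ii), (iv) is sound (the paper runs the same uniform-convexity/Clarkson argument internally, in Step 1a, to fix the fineness of each subdivision). But there is a genuine gap exactly where you place ``the genuine crux,'' and neither of your two proposed escape routes is executed. A grid of mesh $\delta$ in the \emph{domain} does not work as stated: the image $h(\partial T)$ is a Jordan curve with no convexity whatsoever, and the Rad\'o--Kneser--Choquet principle genuinely fails for non-convex image curves (Choquet's classical example), so your fill-in $H$ need not be injective no matter how fine the mesh; nor does smallness of $T$ give the affine model you invoke, since $h$ is merely a Sobolev homeomorphism and $Dh$ may be degenerate or oscillate wildly on every cell, so there is no control on the measure of ``bad'' cells. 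The paper's resolution is your first alternative, carried out by decomposing the \emph{target}: $\Y$ is partitioned into Whitney-type dyadic squares $\Y_\nu$ and the cells are the preimages $\X_\nu=h^{-1}(\Y_\nu)$, so every replacement region has a convex image by fiat and Theorem~\ref{RKCthm} (Alessandrini--Sigalotti) applies. The same device governs everything you subsume under ``threading injectivity through the mollification'': the glued map is only continuous across edges, so the paper performs a second round of replacements on preimages of thin convex \emph{lenses} $\mathcal L^{\alpha\beta}\subset\Y$ straddling each edge in the image (Step 2, making the map smooth with nonvanishing Jacobian up to the two smooth arcs bounding the lens), then smooths across each arc via Proposition~\ref{prop41}, and finally repeats the replace-then-smooth cycle on round disks about the vertex images using Proposition~\ref{prop42} (Steps 4--5). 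None of this survives with a domain-side grid, because every one of these replacements again needs a convex image.

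A second, quantitative omission: (iv) is not ``essentially for free,'' because the smoothing steps strictly \emph{increase} energy---no smoothing ``arranged to respect'' monotonicity exists along an edge where the one-sided derivatives disagree. The paper's bookkeeping is: since $h$ fails to be a diffeomorphism near some point, at least one replacement in Step 1 is nontrivial and yields a \emph{strict} energy drop $2\delta$ (condition ($D_1$)); the crucial feature of Propositions~\ref{prop41} and~\ref{prop42}---the bound $M$ on the smoothed map is independent of the width of the neighborhood $\mathbb V$ of the cut---then allows $\mathbb V$ to be shrunk so that each smoothing step adds at most $\delta$ (conditions ($D_3$), ($D_5$)), and the banked $2\delta$ absorbs both. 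Without this uniform-in-$\mathbb V$ estimate, which you never formulate, neither (iii) nor (iv) survives the smoothing. (A minor further slip: for $p\ne 2$ the coordinate-wise maximum principle confines $H(T)$ to a coordinate rectangle, not to the convex hull of $h(\partial T)$, since linear combinations of $p$-harmonic functions are not $p$-harmonic; this is harmless for (ii) but symptomatic of the care the uncoupled system requires.)
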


A straightforward triangulation argument yields the following corollary.

\begin{corollary}
Let $h \colon \X \onto \Y$ be an orientation-preserving homeomorphism in the Sobolev space $\W^{1,p}_{\loc}(\X, \Y)$,
$1<p<\infty$, defined for open sets $\X, \Y \subset \R^2$. Then there exist piecewise affine homeomorphisms $h_\ell  \colon \X
\onto \Y$, $\ell=1,2,\dots$ such that
\begin{enumerate}[(i)]
\item $h_\ell -h\in \W^{1,p}_\circ (\X, \R^2)$, $\ell=1,2, \dots$\vskip0.15cm
\item $\lim\limits_{\ell \to \infty} (h_\ell -h)=0$, uniformly on $\X$\vskip0.15cm
\item $\lim\limits_{\ell \to \infty} \norm{\nabla h_\ell -\nabla h}_{\mathscr L^p (\X)}= 0$.\vskip0.15cm
\item If $h$ is affine outside of a compact subset of $\X$, then there is a compact subset of $\X$ outside which  $h_\ell \equiv h$, for all $\ell = 1,2, \dots$
\end{enumerate}
\end{corollary}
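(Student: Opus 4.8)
The plan is to treat Theorem~\ref{thmmain} as a black box and to approximate each of the smooth diffeomorphisms it produces by a piecewise affine homeomorphism. First I would fix, for the given $h$, the $\CC^\infty$-diffeomorphisms $g_k \colon \X \onto \Y$ supplied by Theorem~\ref{thmmain}, so that $g_k - h \in \W^{1,p}_\circ(\X,\R^2)$, $g_k \to h$ uniformly, and $\norm{\nabla g_k - \nabla h}_{\mathscr L^p(\X)} \to 0$. The whole matter then reduces to the following local-to-global claim: \emph{for every $\CC^\infty$-diffeomorphism $g \colon \X \onto \Y$ and every $\eta>0$ there is a piecewise affine homeomorphism $G \colon \X \onto \Y$ with $G-g \in \W^{1,p}_\circ(\X,\R^2)$, $\norm{G-g}_{\mathscr L^\infty(\X)}<\eta$ and $\norm{\nabla G - \nabla g}_{\mathscr L^p(\X)}<\eta$.} Granting this, applying it to $g=g_k$ with $\eta=1/k$, writing $G_k-h=(G_k-g_k)+(g_k-h)$, and relabelling $h_\ell:=G_\ell$ delivers piecewise affine homeomorphisms satisfying (i)--(iii), while (iv) will come for free.

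To prove the claim I would choose a locally finite, shape-regular triangulation $\mathcal T$ of $\X$ whose mesh is small on each piece of a locally finite exhaustion of $\X$ and which refines as one approaches $\partial\X$, and let $G$ be the continuous map that is affine on each triangle $T\in\mathcal T$ and agrees with $g$ at every vertex; continuity across a shared edge is automatic, since the two affine pieces already agree at the two common vertices. On a triangle $T$ the constant gradient $A_T=\nabla G|_T$ differs from $\nabla g$ by at most the oscillation of $\nabla g$ over a neighbourhood of $T$, so $\norm{A_T - \nabla g}_{\mathscr L^\infty(T)} \lesssim \omega_g(\diam T)$ and $\norm{G-g}_{\mathscr L^\infty(T)} \lesssim \diam T\,\norm{\nabla g}_{\mathscr L^\infty(T)}$, where $\omega_g$ is a local modulus of continuity of $\nabla g$. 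Choosing the mesh fine enough region by region, and using the local $\mathscr L^p$-integrability of $\nabla g$ together with shape-regularity to sum the triangle-wise estimates, yields both $\norm{G-g}_{\mathscr L^\infty(\X)}<\eta$ and $\norm{\nabla G-\nabla g}_{\mathscr L^p(\X)}<\eta$; refining toward $\partial\X$ forces $G$ and $g$ to share the same boundary trace, i.e.\ $G-g\in\W^{1,p}_\circ(\X,\R^2)$, and identifies the image of $G$ with $\Y$.

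The heart of the matter -- and the step I expect to be the main obstacle -- is showing that $G$ is a \emph{homeomorphism} rather than merely a continuous piecewise affine map. Two ingredients are needed. First, orientation: since $\det\nabla g>0$ on the compact $\overline T$ and $A_T\to\nabla g$, a fine enough mesh makes $\det A_T>0$ on every $T$, so $G$ is orientation-preserving and locally injective off the edges. Second, global injectivity: from $\abs{G(x)-G(y)}\ge\abs{g(x)-g(y)}-2\norm{G-g}_{\mathscr L^\infty}$ together with the local bi-Lipschitz lower bound of the diffeomorphism $g$, any coincidence $G(x)=G(y)$ with $x\ne y$ can occur only for $x,y$ lying in a common vertex star; one then rules this out by a degree argument, using that $G$ restricted to the boundary circle of each vertex star is a simple closed polygon close to the Jordan curve $g(\partial(\text{star}))$, and that an orientation-preserving piecewise affine map of a disk which is injective on its boundary circle is injective on the disk. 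Together these show that $G$ is a local homeomorphism and globally injective, hence a homeomorphism onto $\Y$. Finally, for (iv): if $h$ is affine outside a compact set, Theorem~\ref{thmmain}(v) lets us take $g_k\equiv h$ there, and since affine interpolation reproduces an affine map exactly, $G_k\equiv h$ outside a compact set as well, which completes the construction.
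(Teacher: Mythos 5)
Your proposal is correct and is exactly the route the paper intends: the paper derives the corollary from Theorem~\ref{thmmain} with the one-line remark that ``a straightforward triangulation argument yields the following corollary,'' i.e.\ one affinely interpolates the approximating $\CC^\infty$-diffeomorphisms on fine, shape-regular, locally refined triangulations, which is precisely your scheme. The details you supply (positivity of the triangle-wise Jacobians, injectivity on vertex stars via closeness of the piecewise-constant gradients to $Dg(v)$ plus a degree argument, and summing triangle-wise oscillation estimates over a compact exhaustion) are the standard classical ingredients, going back to Munkres~\cite{Mu}, that the paper leaves unstated.
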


We conclude this introduction with a sketch of the proof. The construction of an approximating diffeomorphism involves five consecutive modifications of
$h$. Steps 1, 2, and 4 are $p$-harmonic replacements based on the Alessandrini-Sigalotti extension~\cite{AS} of  the Rad\'o-Kneser-Choquet Theorem. The other steps involve an explicit smoothing procedure along crosscuts.
For this, we adopted some lines of arguments used in J. Munkres' work~\cite{Mu}.

\section{$p$-harmonic mappings and preliminaries}\label{prel}
Let  $\Omega$  be a bounded domain in the complex plain $\C \simeq \R^2$. A function $u \colon \Omega \to \R$ in the Sobolev class $\W^{1,p}_{\loc}(\Omega)$, $1<p<\infty$, is called $p$-harmonic if
\begin{equation}
\Div\, \abs{\nabla u}^{p-2} \nabla u =0
\end{equation}
meaning that
\begin{equation}
\int_\Omega \langle \abs{\nabla u}^{p-2} \nabla u , \nabla \varphi \rangle =0 \qquad
\text{for every $\varphi \in \mathscr C_\circ^\infty (\Omega)$.}
\end{equation}
The first observation is that the gradient map $f= \nabla u \colon \Omega \to \R^2$ is  $K$-quasiregular with $1\le K \le \max \{p-1, 1/(p-1)\}$, see~\cite{BI}. Consequently $u\in \mathscr C^{1, \alpha}_{\loc}(\Omega)$ with some $0< \alpha = \alpha(p) \le 1$. In fact~\cite{IM} the foremost regularity of a $p$-harmonic function ($p\ne 2$) is $\mathscr C^{k, \alpha}_{\loc}(\Omega)$, where the integer $k \ge 1$ and the H\"older exponent $\alpha \in (0,1]$ are determined by the equation
\[k+\alpha = \frac{7p-6+\sqrt{p^2+12p-12}}{6p-6}> 1 + \frac{1}{3}.\]
Thus, regardless of the exponent $p$, we have $u \in \mathscr C^{1, \alpha}_{\loc}(\Omega)$
with $\alpha=1/3$. Clearly, by elliptic regularity theory, outside the singular set
\[\mathcal S= \big\{ z\in \Omega \colon \nabla u (z)=0  \big\}, \]
we have $u\in \mathscr C^\infty (\Omega \setminus \mathcal S)$. The singular set, being the set of zeros of a quasiregular mapping, consists of isolated points; unless $u \equiv \const$. Pertaining to regularity up to  the boundary, we consider a domain $\Omega$ whose boundary near a point $z_\circ \in \partial \Omega$ is a $\mathscr C^\infty$-smooth arc, say $\Gamma \subset \partial \Omega$. Precisely, we assume that there exist a disk $D=D(z_\circ , \epsilon )$ and a $\CC^\infty$-smooth diffeomorphism $\varphi \colon D \onto \C$ such that
\[
\begin{split}
\varphi (D \cap \Omega) &= \C_+= \{z \colon \im z >0\}\\
\varphi (\Gamma) &= \R= \{z \colon \im z =0\} \\
\varphi (D \setminus \overline{\Omega}) &= \C_-= \{z \colon \im z <0\}.
\end{split}
\]
\begin{proposition}[Boundary Regularity] Suppose $u \in \W^{1,p} (\Omega) \cap \CC (\overline{\Omega})$ is $p$-harmonic in $\Omega$ and $\CC^\infty$-smooth when restricted to $\Gamma$.
Then $u$ is $\CC^{1,\alpha}$-regular up to $\Gamma$, meaning that $u$ extends to $D$ as a $\CC^{1, \alpha}(D)$-regular function, where $\alpha$ depends only on $p$.
\end{proposition}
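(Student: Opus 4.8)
The plan is to flatten $\Gamma$ by the given chart $\varphi$, reduce to homogeneous Dirichlet data on a flat segment, and then recover $\CC^{1,\alpha}$-regularity by reflecting across the segment and appealing to the interior estimate already recorded above.

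First I would transfer the problem to the upper half-plane. Setting $w=u\circ\varphi^{-1}$ on the half-disk $\varphi(D\cap\Omega)\subset\C_+$, the change of variables $z=\varphi^{-1}(\zeta)$ turns $\int_{\Omega}\abs{\nabla u}^p$ into $\int \inn{\mathcal{A}(\zeta)\nabla w,\nabla w}^{p/2}$, where the symmetric positive-definite matrix field $\mathcal{A}$ is assembled from $D\varphi$ and the Jacobian weight and is therefore $\CC^\infty$-smooth. Hence $w$ is a weak solution of a quasilinear equation $\Div\,\mathcal{B}(\zeta,\nabla w)=0$ whose structure function $\mathcal{B}(\zeta,\xi)=\inn{\mathcal{A}(\zeta)\xi,\xi}^{(p-2)/2}\mathcal{A}(\zeta)\xi$ carries exactly the $p$-Laplacian ellipticity and depends smoothly on $\zeta$, while its trace $w|_{\R}=u|_\Gamma\circ\varphi^{-1}$ is $\CC^\infty$. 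Extending this trace to a smooth function $\Phi$ and putting $W=w-\Phi$, I reduce to the homogeneous condition $W=0$ on $\R$ for an equation of the same type, $\Div\,\widetilde{\mathcal{B}}(\zeta,\nabla W)=0$ with $\widetilde{\mathcal{B}}(\zeta,\xi)=\mathcal{B}(\zeta,\xi+\nabla\Phi(\zeta))$.

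Next I would extend $W$ below the line by the odd reflection $W(x,y):=-W(x,-y)$ for $y<0$, and extend the structure function by the matching rule $\widetilde{\mathcal{B}}((x,y),\xi):=-S\,\widetilde{\mathcal{B}}((x,-y),-S\xi)$, $S=\mathrm{diag}(1,-1)$. A direct computation using $S^2=\id$ shows that the conormal fluxes from the two sides cancel along $\{y=0\}$, so the reflected pair solves a degenerate elliptic equation of the same structure in a full neighborhood of the boundary point, with coefficients that are smooth on each half. The $\CC^{1,\alpha}$ interior theory for such equations—the variable-coefficient counterpart of the estimate recalled above—then gives $W\in\CC^{1,\alpha}$ across $\{y=0\}$, and transporting back through the smooth chart yields $u=w\circ\varphi\in\CC^{1,\alpha}$ up to $\Gamma$, with $\alpha$ governed only by $p$.

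The delicate step is the reflection, and its one genuine prerequisite is that the reflected structure function stay continuous across $\{y=0\}$, i.e. $S\,\mathcal{A}(x,0)\,S=\mathcal{A}(x,0)$; this says the pulled-back metric has no cross term along the boundary and is not automatic for an arbitrary $\varphi$. I would secure it by first normalizing the chart—passing to boundary-normal (Fermi) coordinates, a smooth diffeomorphism of $\C_+$ fixing $\R$, after which $\mathcal{A}$ is block-diagonal near $\R$ and the reflected coefficients are genuinely continuous. One could instead sidestep reflection altogether and invoke the known global $\CC^{1,\alpha}$ boundary estimate for quasilinear degenerate equations with smooth coefficients on a smooth boundary; I prefer the reflection route because it keeps the proof anchored to the single interior estimate quoted above.
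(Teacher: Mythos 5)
Your half-plane reduction and the flux-matching computation are correct, but the reflection step breaks down for exactly the boundary data the proposition allows, and you half-diagnosed the obstruction. Continuity of the reflected structure field across $\{y=0\}$ requires the full equivariance $\widetilde{\mathcal B}((x,0),\xi)=-S\,\widetilde{\mathcal B}((x,0),-S\xi)$ for \emph{all} $\xi$, where $S=\mathrm{diag}(1,-1)$. You isolated the part of this condition coming from the matrix, $S\mathcal A(x,0)S=\mathcal A(x,0)$, and repaired it with Fermi coordinates; but the shift by $\nabla\Phi$ imposes a second condition you did not check. Granting $S\mathcal A S=\mathcal A$ on $\{y=0\}$, one computes $\mathcal B((x,0),-S\eta)=-S\,\mathcal B((x,0),\eta)$, hence
\[
-S\,\widetilde{\mathcal B}((x,0),-S\xi)=\mathcal B\bigl((x,0),\,\xi-S\nabla\Phi(x,0)\bigr),
\qquad
\widetilde{\mathcal B}((x,0),\xi)=\mathcal B\bigl((x,0),\,\xi+\nabla\Phi(x,0)\bigr),
\]
and these agree for all $\xi$ only if $-S\nabla\Phi=\nabla\Phi$ on the line, i.e. $\partial_x\Phi(x,0)\equiv 0$. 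Since $\Phi|_{\R}$ is the trace $u|_\Gamma\circ\varphi^{-1}$, which is a general nonconstant smooth function, its tangential derivative is nonzero, and no choice of chart can remove it. So your reflected equation has coefficients with a genuine jump across $\{y=0\}$, and the interior $\CC^{1,\alpha}$ theory you invoke---which needs H\"older (or at least Dini) continuity of the $\zeta$-dependence---is unavailable; merely bounded measurable dependence yields only $\CC^{0,\alpha}$. This is not a technicality: transmission examples such as $\Div\bigl(a(y)\nabla u\bigr)=0$ with piecewise-constant $a>0$ have solutions whose gradients jump across the interface, so no soft argument recovers gradient H\"older continuity across a line of discontinuous coefficients. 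Your construction proves the proposition only when $u|_\Gamma$ is locally constant. (A smaller inaccuracy: the interior estimate actually recorded in the paper is the quasiregularity of $\nabla u$ for the constant-coefficient planar $p$-Laplacian, which does not apply verbatim to the anisotropic structure $\mathcal B(\zeta,\xi)$ produced by flattening; even in the interior you would need the variable-coefficient quasilinear theory.)

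The alternative you relegate to a closing aside is in fact the proof. The paper states this proposition without any argument precisely because it is the known boundary $\CC^{1,\alpha}$ estimate for degenerate quasilinear equations of $p$-Laplacian type with smooth Dirichlet data on a smooth boundary portion (Lieberman, Tolksdorf, going back to Ural'tseva), established by direct boundary barrier and perturbation arguments rather than by reflection---reflection for the $p$-Laplacian is known to work cleanly only for data that is constant along the reflecting line, for the reason exhibited above. If you want a self-contained write-up, flatten the boundary as you did and then quote (or reproduce) those boundary estimates; the odd reflection should be dropped.
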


\subsection{The Dirichlet problem}
There are two  formulations of the Dirichlet boundary value problem for $p$-harmonic equation; both are essential for our investigation. We begin with the variational formulation.

\begin{lemma}
Let $u_\circ \in \W^{1,p}(\Omega)$ be a given Dirichlet data. There exists precisely one function $u\in u_\circ +  \W_\circ^{1,p}(\Omega)$ which minimizes the $p$-harmonic energy:
\[\E_p[u]=\inf \left\{ \int_\Omega \abs{\nabla w}^p\colon w\in u_\circ +  \W_\circ^{1,p}(\Omega) \right\}.\]
\end{lemma}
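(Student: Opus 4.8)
The plan is to use the direct method of the calculus of variations, exploiting the convexity of the integrand $\xi \mapsto \abs{\xi}^p$ for existence and its strict convexity for uniqueness. Set $m = \inf\{\int_\Omega \abs{\nabla w}^p \colon w \in u_\circ + \W_\circ^{1,p}(\Omega)\}$. Since $u_\circ$ itself is admissible and the integrand is nonnegative, $0 \le m \le \E_p[u_\circ] < \infty$, so $m$ is finite. First I would pick a minimizing sequence $w_k \in u_\circ + \W_\circ^{1,p}(\Omega)$ with $\E_p[w_k] \to m$. Writing $w_k = u_\circ + \varphi_k$ with $\varphi_k \in \W_\circ^{1,p}(\Omega)$, the Poincar\'e inequality on $\W_\circ^{1,p}(\Omega)$ controls $\norm{\varphi_k}_{\mathscr L^p(\Omega)}$ by $\norm{\nabla \varphi_k}_{\mathscr L^p(\Omega)}$, and the latter stays bounded because $\norm{\nabla w_k}_{\mathscr L^p(\Omega)}$ is bounded along a minimizing sequence. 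Hence $\{w_k\}$ is bounded in $\W^{1,p}(\Omega)$.

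Next, since $1 < p < \infty$, the space $\W^{1,p}(\Omega)$ is reflexive, so a subsequence (not relabeled) converges weakly, $w_k \rightharpoonup u$ in $\W^{1,p}(\Omega)$; in particular $\nabla w_k \rightharpoonup \nabla u$ weakly in $\mathscr L^p(\Omega, \R^2)$. The admissible class $u_\circ + \W_\circ^{1,p}(\Omega)$ is a closed affine subspace, hence convex and norm-closed, therefore weakly closed by Mazur's theorem; consequently the weak limit $u$ again lies in $u_\circ + \W_\circ^{1,p}(\Omega)$. It remains to show $\E_p[u] \le m$. The functional $F \mapsto \int_\Omega \abs{F}^p$ is convex and strongly continuous on $\mathscr L^p(\Omega, \R^2)$, hence weakly lower semicontinuous, which gives $\E_p[u] \le \liminf_k \E_p[w_k] = m$. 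Combined with the admissibility of $u$, this forces $\E_p[u] = m$, so $u$ is a minimizer.

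For uniqueness, I would invoke the strict convexity of $\xi \mapsto \abs{\xi}^p$ on $\R^2$ for $1 < p < \infty$. Suppose $u_1, u_2$ are two minimizers. Their midpoint $\tfrac{1}{2}(u_1 + u_2)$ is again admissible, and by convexity $\E_p[\tfrac12(u_1+u_2)] \le \tfrac12\E_p[u_1] + \tfrac12\E_p[u_2] = m$, with equality forced by minimality. Strict convexity then yields $\nabla u_1 = \nabla u_2$ almost everywhere, so $u_1 - u_2$ is constant; since $u_1 - u_2 \in \W_\circ^{1,p}(\Omega)$ has vanishing boundary values, that constant is $0$, whence $u_1 = u_2$.

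The routine parts are the Poincar\'e bound and the reflexivity of $\W^{1,p}$. The step that carries the real weight is the weak lower semicontinuity of the energy, which is precisely where convexity of $\abs{\xi}^p$ enters; the uniqueness argument is dual to this, and its delicate point is upgrading convexity to strict convexity and verifying that equality in the convexity estimate forces coincidence of the gradients almost everywhere.
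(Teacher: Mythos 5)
Your proof is correct and is exactly the argument the paper intends: the lemma is stated without proof as a classical fact, and your direct-method treatment (minimizing sequence, Poincar\'e bound on $\W_\circ^{1,p}(\Omega)$, reflexivity for $1<p<\infty$, weak lower semicontinuity via convexity of $\xi\mapsto\abs{\xi}^p$, and uniqueness via its strict convexity together with the a.e.\ equality argument) is the standard one that fills it in. The only hypothesis worth flagging explicitly is the boundedness of $\Omega$, assumed at the start of Section~2 of the paper, which is what licenses your appeal to the Poincar\'e inequality and also forces the constant $u_1-u_2$ to vanish.
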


The solution $u$ is certainly a $p$-harmonic function, so $\CC^{1, \alpha}_{\loc} (\Omega)$-regular. However, more efficient to us will be the following classical formulation of the Dirichlet problem.

\begin{problem} Given $u_\circ \in \CC(\partial {\Omega})$ find  a $p$-harmonic function $u$ in $\Omega$ which extends continuously to $\overline{\Omega}$ such that $u_{|_{\partial \Omega}}=u_\circ$.
\end{problem}

It is not difficult to see that such solution (if exists) is unique. However, the existence poses rather delicate conditions on  $\partial \Omega$ and the data $u_\circ \in \CC (\overline{\Omega})$.  We shall confine ourselves to Jordan  domains $\Omega \subset \C$ and the Dirichlet data $u_\circ \in \CC (\overline{\Omega})$ of finite $p$-harmonic energy.
In this case both formulations are valid and lead to the same solution. Indeed, the variational solution is continuous up to the boundary because each boundary point of a planar Jordan domain is a regular point for
the $p$-Laplace operator $\Delta_p$~\cite[p.418]{Ha}. See~\cite[6.16]{HKMb} for the discussion of boundary regularity and relevant capacities and~\cite[Lemma 2]{Le} for a capacity estimate that applies to simply connected domains.

\begin{proposition}[Existence]\label{proexist} Let $\Omega \subset \C$ be a bounded Jordan domain and $u_\circ \in \W^{1,p}(\Omega) \cap \CC (\overline{\Omega})$. There exists, unique, $p$-harmonic function $u\in \W^{1,p}(\Omega) \cap \CC (\overline{\Omega})$ such that $u_{|_{\partial \Omega}}=u_{\circ |_{\partial \Omega}}$.
\end{proposition}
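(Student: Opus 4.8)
The plan is to produce $u$ as the energy minimizer furnished by the Lemma of the previous subsection and then to show that this minimizer attains the prescribed boundary data continuously; uniqueness will come separately from the comparison principle, so let me dispose of it first. Suppose $u_1, u_2 \in \W^{1,p}(\Omega)\cap\CC(\overline\Omega)$ are two $p$-harmonic functions with $u_1=u_2=u_\circ$ on $\partial\Omega$. Fix $\varepsilon>0$; by continuity up to the boundary the open set $\{u_1>u_2+\varepsilon\}$ is compactly contained in $\Omega$, so $\psi=(u_1-u_2-\varepsilon)_+$ is an admissible test function in $\W^{1,p}_\circ(\Omega)$. Subtracting the two weak equations and testing against $\psi$ gives
\[
\int_{\{u_1>u_2+\varepsilon\}}\big\langle \abs{\nabla u_1}^{p-2}\nabla u_1-\abs{\nabla u_2}^{p-2}\nabla u_2,\ \nabla u_1-\nabla u_2\big\rangle=0,
\]
and the strict monotonicity of $\xi\mapsto\abs{\xi}^{p-2}\xi$ forces $\nabla u_1=\nabla u_2$ a.e.\ there, whence the set is empty. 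Thus $u_1\le u_2$, and by symmetry $u_1=u_2$.

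For existence, let $u\in u_\circ+\W_\circ^{1,p}(\Omega)$ be the minimizer. It is $p$-harmonic, hence $\CC^{1,\alpha}_{\loc}(\Omega)$, it lies in $\W^{1,p}(\Omega)$, and $u-u_\circ\in\W_\circ^{1,p}(\Omega)$ by construction. The only remaining point is that $u$ extends continuously to $\overline\Omega$ with $u|_{\partial\Omega}=u_\circ|_{\partial\Omega}$. This reduces the whole proposition to the local boundary assertion $\lim_{x\to\xi}u(x)=u_\circ(\xi)$ for each $\xi\in\partial\Omega$. Note that the smooth-arc Boundary Regularity Proposition is of no use here, since a general Jordan curve need be nowhere smooth; instead I would invoke nonlinear potential theory.

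The decisive ingredient is that \emph{every} boundary point of a planar Jordan domain is a regular point for $\Delta_p$. By the Wiener criterion (see \cite[6.16]{HKMb}), $\xi$ is regular as soon as the complement is capacity-thick at $\xi$, i.e.
\[
\int_0^1 \left[\frac{\mathrm{cap}_p\big(\overline{B}(\xi,t)\setminus\Omega,\ B(\xi,2t)\big)}{\mathrm{cap}_p\big(\overline{B}(\xi,t),\ B(\xi,2t)\big)}\right]^{\frac{1}{p-1}}\frac{\dtext t}{t}=\infty.
\]
Granting this, the continuity-at-regular-points theorem for the variational solution with continuous Sobolev data (\cite[p.418]{Ha}, \cite[Lemma 2]{Le}) gives $\lim_{x\to\xi}u(x)=u_\circ(\xi)$ at every $\xi\in\partial\Omega$, and since $u_\circ\in\CC(\overline\Omega)$ these boundary limits patch together into a continuous extension of $u$ to $\overline\Omega$ with the correct trace.

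The step I expect to be the main obstacle is verifying the capacity lower bound for an arbitrarily wild Jordan curve. The saving feature is that \emph{topological} thickness already forces \emph{capacitary} thickness in the plane: because $\partial\Omega$ is a Jordan curve through $\xi$, for all small $t$ the connected component of $\partial\Omega\cap\overline{B}(\xi,t)$ containing $\xi$ must reach the sphere $\partial B(\xi,t)$, hence is a continuum of diameter at least $t$ lying in $\C\setminus\Omega$. In dimension two a continuum of diameter comparable to $t$ carries relative $p$-capacity bounded below by a universal multiple of $\mathrm{cap}_p(\overline{B}(\xi,t),B(\xi,2t))$, so each integrand ratio is bounded away from zero and the Wiener integral diverges. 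With regularity of all boundary points established, the argument closes as above.
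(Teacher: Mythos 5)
Your proposal is correct and takes essentially the same route as the paper: the authors likewise take the variational minimizer from the preceding Lemma and deduce continuity up to the boundary from the fact that every boundary point of a planar Jordan domain is regular for $\Delta_p$, citing \cite[p.418]{Ha}, \cite[6.16]{HKMb}, and the capacity estimate of \cite[Lemma 2]{Le} for exactly the content of your Wiener-criterion computation. The details you supply --- the comparison-principle uniqueness via the test function $(u_1-u_2-\varepsilon)_+$ and the boundary-bumping argument showing that topological thickness of the complementary continuum forces uniform $p$-fatness in the plane (valid for all $1<p<\infty$ since $p>n-1=1$) --- are precisely the standard facts behind those citations.
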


\subsection{Rad\'o-Kneser-Choquet Theorem}
Let $h=u+iv$ be a complex harmonic mapping in a Jordan domain $\mathbb U$ that is continuous on $\overline{\mathbb U}$. Assume that the boundary mapping $h \colon \partial \mathbb U \onto \Gamma$ is an orientation-preserving homeomorphism onto a convex Jordan curve. Then $h$ is a $\CC^\infty$-smooth diffeomorphism of $\mathbb U$ onto the bounded component of $\C \setminus \Gamma$. Thus, in particular, the Jacobian determinant $J(z,h)= \abs{h_z}^2 - \abs{h_{\bar z}}^2$ is strictly positive in $\mathbb U$, see~\cite[p.20]{Dub}. Suppose, in addition, that $\partial \mathbb U$ contains a $\CC^\infty$-smooth arc $\gamma \subset \partial \mathbb U$, and $h$ takes $\gamma$ onto a $\CC^\infty$-smooth subarc in $\Gamma$. Then $h$ is $\CC^\infty$-smooth up to $\gamma$ and its Jacobian determinant is positive on $\gamma$ as well, see~\cite[p.116]{Dub}. Numerous presentations of the proof of Rad\'o-Kneser-Choquet Theorem can be found,~\cite{Dub}. The idea that goes back to Kneser~\cite{Kn} and Choquet~\cite{Ch} is to look at the structure of  the level curves of the coordinate functions $u=\re h$, $v=\im h$ and their linear combinations.
These ideas have been applied to more general linear and nonlinear elliptic systems of PDEs in the complex plane~\cite{BMN}, see also \cite{A2, AN, Lew, Ma} for related problems concerning critical points. In the present paper we shall explore a result due to G. Alessandrini and M. Sigalotti~\cite{AS} for a nonlinear system that consists of two $p$-harmonic equations
\[\begin{cases}
\Div \abs{\nabla u}^{p-2}\nabla u =0\\
\Div \abs{\nabla v}^{p-2}\nabla v =0
\end{cases}, \qquad 1<p< \infty, \quad h=u+iv.\]
Call it \emph{uncoupled $p$-harmonic system}. The novelty and key element in~\cite{AS} is the associated single linear elliptic PDE of divergence type (with variable coefficients) for a linear combination of $u$ and $v$. Such combination represents a real part of a quasiregular mapping and, therefore, admits only isolated critical points. We shall not go into their arguments in detail, but instead extract the following $p$-harmonic analogue of the Rad\'o-Kneser-Choquet Theorem.
\begin{theorem}[G. Alessandrini and M. Sigalotti]\label{RKCthm} Let $\mathbb U$ be a bounded Jordan domain and $h=u+iv \colon \overline{\mathbb U} \to \C$ be a continuous mapping whose coordinate functions $u,v \in \W^{1,p}(\mathbb U)$, $1<p<\infty$, are $p$-harmonic. Suppose that $h \colon \partial \mathbb U \onto \gamma$ is an orientation-preserving homeomorphism onto a convex Jordan curve $\gamma$. Then
\begin{enumerate}[(i)]
\item $h$ is a $\CC^\infty$-diffeomorphism from $\mathbb U$ onto the bounded component of $\C \setminus \gamma$. In particular,
\[J(z,h)= \abs{h_z}^2-\abs{h_{\bar z}}^2 >0 \quad \mbox{ in } \mathbb U.\]
\item If, in addition, $\partial \mathbb U$ contains a $\CC^\infty$-smooth arc $\Gamma \subset \partial \mathbb U$ and $h(\Gamma)$ is a $\CC^\infty$-smooth subarc in $\gamma$, then $h$ is $\CC^{1, \alpha}$-regular up to $\Gamma$, for some $0< \alpha = \alpha (p) <1$ (actually $\CC^\infty$). Moreover $J(z,h)>0$ on $\Gamma$ as well.
\end{enumerate}
\end{theorem}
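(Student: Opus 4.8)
The plan is to run the classical Radó--Kneser--Choquet level-curve argument, the one genuinely new ingredient being the Alessandrini--Sigalotti observation that every real linear combination $w=\alpha u+\beta v$ of the two $p$-harmonic coordinates is the real part of a quasiregular mapping. From this I extract that the critical points of $w$ are isolated and that near each such $z_0$ one has the normal form $w(z)-w(z_0)=\re\big(a(z-z_0)^{m}\big)+o(\abs{z-z_0}^{m})$ with $m\ge 2$, so that the nodal set $\{w=w(z_0)\}$ consists of $2m\ge 4$ analytic arcs through $z_0$, cutting a neighborhood into sectors on which $w-w(z_0)$ alternates in sign. I then play this local picture against the convexity of $\gamma$, which enters through the identity $w=\langle(\alpha,\beta),h\rangle$: on $\partial\mathbb U$ the function $w$ is the projection of the convex curve $\gamma$ onto the direction $(\alpha,\beta)$.

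For (i) I suppose $J(z_0,h)=0$ at some interior $z_0$. Then $\nabla u(z_0),\nabla v(z_0)$ are linearly dependent, so $\nabla w(z_0)=0$ for a suitable $(\alpha,\beta)\ne 0$. By the maximum principle $w$ has no interior extremum, so $c:=w(z_0)$ is a strictly intermediate value of $w|_{\partial\mathbb U}$; the line $\{w=c\}$ is then a secant of $\gamma$ and meets it in exactly two points, i.e.\ $w|_{\partial\mathbb U}=c$ at exactly two boundary points. On the other hand $\{w=c\}\cap\overline{\mathbb U}$ is a finite forest: it has no cycles, since a closed nodal loop would bound a region on which the maximum principle forces $w\equiv c$, against the isolated critical points; and each arc terminates either at an interior critical vertex of valence $\ge 4$ or at a boundary leaf. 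A forest containing the valence-$\ge 4$ vertex $z_0$ has at least four leaves, forcing at least four boundary crossings of the value $c$ --- contradicting the count of two. Hence $J\ne 0$ in $\mathbb U$, and by continuity together with the orientation-preserving boundary data $J>0$. In particular $\nabla u,\nabla v$ never vanish, so elliptic regularity off the singular set gives $h\in\CC^\infty(\mathbb U)$; a degree computation (the winding of $h|_{\partial\mathbb U}$ about any point of the bounded component of $\C\setminus\gamma$ equals $1$, and $J>0$ makes preimages count positively) shows $h$ is injective with image exactly that component, completing (i).

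For (ii) the Boundary Regularity Proposition applied separately to $u$ and $v$ gives the $\CC^{1,\alpha}$-extension up to $\Gamma$ at once, since $h(\Gamma)\subset\gamma$ being a smooth subarc makes $u|_\Gamma,v|_\Gamma$ smooth. To obtain $J>0$ on $\Gamma$ I again argue by contradiction: if $J(z_0)=0$ with $z_0\in\Gamma$ then, $h|_\Gamma$ being a smooth immersion, the two columns of $Dh(z_0)$ are parallel, and choosing $(\alpha,\beta)$ orthogonal to their common direction yields $\nabla w(z_0)=0$ with $(\alpha,\beta)$ the outer normal of $\gamma$ at $h(z_0)$. Now $\{w=c\}$, $c=w(z_0)$, is a \emph{supporting} line of $\gamma$, so $c$ is the maximal value of $w|_{\partial\mathbb U}$ and hence $w\le c$ in $\mathbb U$; as $h$ is a diffeomorphism in the interior, $w\not\equiv c$. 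A boundary Hopf argument then forces the inward normal derivative $\partial_N w(z_0)<0$, contradicting $\nabla w(z_0)=0$. Thus $J>0$ on $\Gamma$, and since the gradients are nonzero there the coefficients are smooth up to $\Gamma$, so Schauder bootstrapping upgrades $\CC^{1,\alpha}$ to $\CC^\infty$.

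The step I expect to fight hardest for is exactly this boundary positivity in (ii). The divergence-form equation governing $w$ has coefficient $\abs{\nabla u}^{p-2}$ (and its counterpart for $v$), which for $p\ne 2$ may degenerate or blow up precisely at the candidate critical point $z_0$, so the classical Hopf lemma cannot be invoked for that equation as it stands. What is really needed is a \emph{boundary} Hopf lemma for real parts of quasiregular mappings: if $w=\re F$ with $F$ quasiregular up to the smooth arc $\Gamma$ and $w$ attains a one-sided maximum at $z_0\in\Gamma$, then $\nabla w(z_0)\ne 0$ unless $w$ is constant. I would prove this by straightening $\Gamma$ and reflecting $F$ across the resulting segment, using the boundary relation inherited from $h(\Gamma)\subset\gamma$, so as to reduce matters to the interior fact that a nonconstant real part of a quasiregular map has isolated critical points with the normal form $\re\big(a(z-z_0)^m\big)$. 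Carrying out this reflection and boundary normal-form analysis, rather than the interior level-curve count, is where the genuine analytic work lies.
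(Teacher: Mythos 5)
Your part~(i) is, in substance, a reconstruction of Alessandrini--Sigalotti's own proof of their Theorem~5.1 rather than an independent argument: the level-curve counting is the classical Kneser--Choquet scheme, and the only genuinely hard point --- that $w=\alpha u+\beta v$ is the real part of a quasiregular mapping, equivalently that $w$ solves a single uniformly elliptic divergence-form equation --- is exactly the novelty of \cite{AS}, which you import without proof. (It is true, and follows from the monotonicity and Lipschitz inequalities for $\Phi_p(\xi)=\abs{\xi}^{p-2}\xi$ applied to the divergence-free field $\Phi_p(\nabla(\alpha u))-\Phi_p(-\nabla(\beta v))$, using that $\alpha u$ and $\beta v$ are again $p$-harmonic; but ``extract'' is doing a lot of work in your first sentence.) Note that the paper itself does not reprove anything here: Theorem~2.5 is obtained as a corollary of \cite[Theorem~5.1]{AS}, with three remarks disposing of the exterior cone condition, invoking the locality of the boundary-Jacobian argument, and bootstrapping $\CC^{1,\alpha}$ to $\CC^\infty$ via uniform ellipticity once $J>0$ up to $\Gamma$. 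So if citing \cite{AS} for the key lemma is admissible, your (i) carries the same logical content as the paper's citation, only longer. Two smaller inaccuracies in (i): the nodal arcs need not be analytic, since the coefficients of the equation for $w$ are merely measurable (the topological ``at least four branches, alternating signs'' structure is what survives and is all you need); and the step from ``at least four leaves'' to ``at least four boundary crossings'' requires the additional standard observation that two ends of the nodal tree cannot land at the same boundary point, again by the maximum principle.

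The genuine gap is where you yourself predicted, and your proposed repair does not work. A Schwarz-type reflection of $F$ across the straightened arc requires a boundary relation holding along the whole arc (Dirichlet $w\equiv\const$, or vanishing conormal derivative); here $h(\Gamma)\subset\gamma$ yields only the one-sided inequality $w\le c$, with equality typically at the single tangency point $z_0$ --- an inequality is not a reflection rule, so there is nothing to reflect. Moreover, your stated worry is aimed at the wrong point: the coefficients degenerate where $\nabla u$ or $\nabla v$ vanish, not where $\nabla w$ does. The viable self-contained route is a case analysis at $z_0\in\Gamma$. If $\nabla u(z_0)\ne 0$ and $\nabla v(z_0)\ne 0$, then $\nabla(\alpha u)$ and $-\nabla(\beta v)$ agree at $z_0$ and are nonzero there, so (using $\CC^{1,\alpha}$-regularity up to $\Gamma$ from the Boundary Regularity Proposition) the matrix in the equation $\Div(A\nabla w)=0$ is H\"older continuous and uniformly elliptic near $z_0$, and the classical Hopf lemma for divergence-form equations with Dini coefficients applies. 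If one gradient vanishes, say $\nabla u(z_0)=0$, then $\nabla w(z_0)=0$ forces $\beta=0$, so $w$ is a scalar multiple of the single $p$-harmonic function $u$, and one invokes the known Hopf boundary point lemma for the $p$-Laplacian. Two further loose ends you assert rather than prove: that $h|_\Gamma$ is an immersion (needed both to rule out $Dh(z_0)=0$ and to identify the image of $Dh(z_0)$ with the tangent of $\gamma$), and that $u|_\Gamma, v|_\Gamma$ are smooth --- the hypothesis that $h(\Gamma)$ is a smooth subarc does not by itself make the restriction $h|_\Gamma$ smooth as a map; it must be read as ``$h|_\Gamma$ is a smooth embedding,'' which is how the paper uses it in its application. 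The paper sidesteps this entire nest of difficulties by remarking that the proof of $J>0$ on the boundary in \cite[Theorem~5.1]{AS} is local and applies verbatim to the arc $\Gamma$; as written, your proposal leaves precisely that step unproved.
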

This theorem is a straightforward corollary of Theorem 5.1 in \cite{AS}. However, three remarks are in order.
\begin{enumerate}
\item In their Theorem 5.1 the authors of~\cite{AS} assume that $\mathbb U$ satisfies an exterior cone condition. This is needed only insofar as to ensure the existence of a continuous extension of a given homeomorphism $\Phi \colon \partial \mathbb U \to \gamma$ into $\mathbb U$ whose coordinate functions are $p$-harmonic in $\mathbb U$. Obviously, such an extension is unique, though the $p$-harmonic energy need not be finite. Once we have such a mapping the exterior cone condition on $\mathbb U$ for the conclusion of Theorem~5.1 is redundant, see Remark~3.2 in~\cite{AS}. This is exactly the case we are dealing with in Theorem~\ref{RKCthm}.
\item In regard to the statement (ii) we point out that in Theorem~5.1 of~\cite{AS} the authors work with the mappings that are smooth up to the entire boundary of $\mathbb U$. Nonetheless their proof that $J(z,h)>0$ on $\partial \mathbb U$ is local, so applies without any change to our case (ii).
\item Since $J(z,h) > 0$ in $\mathbb U$ up to the arc $\Gamma \subset \partial \mathbb U$ the coordinate functions of $h$ have nonvanishing gradient. This means that $p$-harmonic equation is uniformly elliptic up to $\Gamma$. Consequently, $h$ is $\CC^\infty$-smooth on $\mathbb U$ up to $\Gamma$.
\end{enumerate}

\subsection{The $p$-harmonic replacement}
Let $\Omega$ be a bounded domain in $\R^2 \simeq \C$. We consider a class $\mathcal A(\Omega)=\mathcal A^p(\Omega)$, $1<p< \infty$, of uniformly continuous functions $h=u+iv \colon \Omega \to \C$
 having finite $p$-harmonic energy and furnish it with the norm
\[\norm{h}_{\mathcal A^p(\Omega)} = \norm{h}_{\CC(\Omega)} + \norm{\nabla h}_{\mathscr L^p (\Omega)}.\]
The closure of $\CC_\circ ^\infty (\Omega)$ in $\mathcal A^p (\Omega)$ will be denoted
by $\mathcal A_\circ^p (\Omega)$.

\begin{proposition}\label{proreplace}
Let $\mathbb U \Subset \Omega$ be a Jordan subdomain of $\Omega$. There exists a unique operator
\[\mathbf R_{\mathbb U} \colon \mathcal A^p (\Omega) \to \mathcal A^p (\Omega) \]
(nonlinear if $p\ne 2$) such that for every $h\in \mathcal A^p (\Omega)$
\begin{equation}\label{propcond1}
\begin{split}
\mathbf R_{\mathbb U} h &=h \qquad \mbox{in } \Omega \setminus \mathbb U\\
\mathbf R_{\mathbb U} &\in h+ \W^{1,p}_\circ (\mathbb U)\\
\Delta_p \mathbf R_{\mathbb U} h&=0 \qquad \mbox{in } \mathbb U
\end{split}
\end{equation}
\begin{equation}\label{propcond2}
\EE_\Omega [\mathbf R_{\mathbb U} h] \le \EE_\Omega [ h]
\end{equation}
Equality occurs in~\eqref{propcond2} if and only if $h$ is $p$-harmonic in $\mathbb U$.
\end{proposition}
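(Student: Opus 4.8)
\emph{Plan.} The plan is to define $\mathbf{R}_{\mathbb{U}} h$ by a coordinate‑wise $p$‑harmonic replacement inside $\mathbb{U}$, leaving $h$ untouched on $\Omega \setminus \mathbb{U}$, and then to verify the listed properties one at a time. The only genuinely delicate points are the return of $\mathbf{R}_{\mathbb{U}} h$ to the class $\mathcal{A}^p(\Omega)$ and the characterization of equality in \eqref{propcond2}; everything else is bookkeeping.

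\emph{Construction and properties \eqref{propcond1}.} Given $h = u + iv \in \mathcal{A}^p(\Omega)$, the uniform continuity of $h$ together with $\mathbb{U} \Subset \Omega$ guarantees that $u$ and $v$ restrict to functions in $\W^{1,p}(\mathbb{U}) \cap \CC(\overline{\mathbb{U}})$. Since $\mathbb{U}$ is a bounded Jordan domain, Proposition~\ref{proexist} supplies unique $p$‑harmonic functions $\tilde u,\tilde v \in \W^{1,p}(\mathbb{U}) \cap \CC(\overline{\mathbb{U}})$ agreeing with $u,v$ on $\partial\mathbb{U}$. I then set $\mathbf{R}_{\mathbb{U}} h = \tilde u + i\tilde v$ on $\mathbb{U}$ and $\mathbf{R}_{\mathbb{U}} h = h$ on $\Omega \setminus \mathbb{U}$. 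The three conditions in \eqref{propcond1} are then immediate: the first holds by definition, the third because $\tilde u,\tilde v$ solve the uncoupled $p$‑harmonic system, and the second because Proposition~\ref{proexist} (equivalently its variational formulation) places $\tilde u - u$ and $\tilde v - v$ in $\W^{1,p}_\circ(\mathbb{U})$, so the extension by zero across $\partial\mathbb{U}$ keeps $\mathbf{R}_{\mathbb{U}} h - h$ in $\W^{1,p}_\circ(\mathbb{U})$.

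\emph{Return to $\mathcal{A}^p(\Omega)$ --- the main obstacle.} I expect the principal point requiring the structural hypotheses to be the verification that $\mathbf{R}_{\mathbb{U}} h \in \mathcal{A}^p(\Omega)$. Continuity across $\partial\mathbb{U}$ is forced by the matching of boundary traces together with the continuity of $\tilde u,\tilde v$ up to $\partial\mathbb{U}$ granted by Proposition~\ref{proexist}; this is exactly where the Jordan‑domain hypothesis on $\mathbb{U}$ is used. For global uniform continuity I would fix an intermediate open set $V$ with $\overline{\mathbb{U}} \subset V \Subset \Omega$ and combine the (uniform) continuity of $\mathbf{R}_{\mathbb{U}} h$ on the compact set $\overline{V}$ with the uniform continuity of $h$ on $\Omega$: for two points closer than $\dist(\overline{\mathbb{U}},\partial V)$ at least one of the two moduli of continuity applies, which yields a joint modulus. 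Finiteness of the energy is clear from the decomposition $\EE_\Omega[\mathbf{R}_{\mathbb{U}} h] = \EE_{\Omega \setminus \mathbb{U}}[h] + \int_{\mathbb{U}} (\abs{\nabla \tilde u}^p + \abs{\nabla \tilde v}^p)$ together with the finite energy of the $p$‑harmonic solutions.

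\emph{Energy inequality, equality case, and uniqueness.} The inequality \eqref{propcond2} follows from the same decomposition by applying, coordinate by coordinate, the minimizing property from the variational formulation of the Dirichlet problem: since $\tilde u$ minimizes $\int_{\mathbb{U}} \abs{\nabla w}^p$ over $w \in u + \W^{1,p}_\circ(\mathbb{U})$ (and the variational and classical solutions coincide on a Jordan domain), one has $\int_{\mathbb{U}} \abs{\nabla \tilde u}^p \le \int_{\mathbb{U}} \abs{\nabla u}^p$ and likewise for $v$, whence $\EE_\Omega[\mathbf{R}_{\mathbb{U}} h] \le \EE_\Omega[h]$. For the equality clause, equality forces $\int_{\mathbb{U}} \abs{\nabla u}^p = \int_{\mathbb{U}} \abs{\nabla \tilde u}^p$ for each coordinate, so $u$ and $v$ are themselves minimizers; the uniqueness of the minimizer, a consequence of the strict convexity of $w \mapsto \int \abs{\nabla w}^p$ for $1 < p < \infty$, then gives $u = \tilde u$ and $v = \tilde v$, i.e. $h$ is already $p$‑harmonic in $\mathbb{U}$, and the converse is immediate. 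Finally, uniqueness of the operator is settled by noting that any $\mathbf{R}$ obeying \eqref{propcond1} must produce, inside $\mathbb{U}$, a $p$‑harmonic function in the prescribed coset $h + \W^{1,p}_\circ(\mathbb{U})$; such a function is unique by the uniqueness in the Dirichlet problem, and outside $\mathbb{U}$ it equals $h$, so $\mathbf{R} = \mathbf{R}_{\mathbb{U}}$.
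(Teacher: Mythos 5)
Your proof is correct and takes essentially the same route as the paper's: coordinate-wise $p$-harmonic replacement built from Proposition~\ref{proexist} (with the variational and classical Dirichlet solutions coinciding on a Jordan domain), zero extension across $\partial\mathbb{U}$, and the minimum-energy principle yielding \eqref{propcond2}. You are in fact somewhat more thorough than the paper, whose proof leaves the global uniform-continuity patching, the strict-convexity argument for the equality case, and the uniqueness of the operator implicit; your explicit treatment of these points is sound.
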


\begin{proof}
For $h=u+iv$ we define
\[ \mathbf R_{\mathbb U} h = \mathbf R_{\mathbb U} u + i\,  \mathbf R_{\mathbb U} v.\]
It is therefore enough to construct the replacement for real-valued functions. For $u\in \mathcal A^p (\Omega)$ real,  we define
\[\mathbf R_{\mathbb U} u = \begin{cases} u \quad \mbox{in } \Omega \setminus \mathbb U \\
\tilde{u}  \quad \mbox{in } \mathbb U
\end{cases}\]
where $\tilde u$ is determined uniquely as a solution to the Dirichlet problem
\[\begin{cases} \Div \,\abs{\nabla \tilde{u}}^{p-2} \nabla \tilde{u}= 0 \quad \mbox{in } \mathbb U\\
\tilde{u}\in u + \W^{1,p}_\circ (\mathbb U)
\end{cases}
\]
so conditions~\eqref{propcond1} are fulfilled. That $\mathbf R_{\mathbb U} u$ is continuous in $\Omega$ is guaranteed by Proposition~\ref{proexist}. The solution $\tilde{u}$ is found as the minimizer of the $p$-harmonic energy in the class $u+\W^{1,p}_\circ (\mathbb U)$, so we certainly have
\[\EE_\Omega [\mathbf R_{\mathbb U} u] \le \EE_\Omega [ u] \]
The same estimate holds for the imaginary part of $h$, so adding them  up yields
\[\EE_\Omega [\mathbf R_{\mathbb U} h] \le \EE_\Omega [ h]. \qedhere\]
\end{proof}

\begin{remark}
The reader may wish to know that the operator $\mathbf R_{\mathbb U}\colon \mathcal A(\Omega)\to\mathcal A(\Omega)$ is continuous, though we do not appeal to this fact.
\end{remark}

\subsection{Smoothing along a crosscut}

 Consider a bounded Jordan domain $\mathbb U$ and a $\CC^\infty$-smooth crosscut $\Gamma \subset \mathbb U$ with two distinct end-points in $\partial \mathbb U$. By definition, this means that there is a $\CC^\infty$-diffeomorphism $\varphi \colon \C \onto \mathbb U$ such that $\Gamma= \varphi (\R)$, and its distinct endpoints are given by
\[
\begin{split}
\lim\limits_{x \to - \infty} \varphi (x) \in \partial \mathbb U \\
\lim\limits_{x \to  \infty} \varphi (x) \in \partial \mathbb U
\end{split}
\]
Such $\Gamma$ splits $\mathbb U$ into two Jordan subdomains
\[
\begin{split}
\mathbb U_+ = \varphi (\C_+), \quad \C_+= \{z\colon \im z >0\}\\
\mathbb U_- = \varphi (\C_-), \quad \C_-= \{z\colon \im z <0\}.
\end{split}
\]
Suppose we are given a homeomorphism $f \colon \overline{\mathbb U} \to \mathbb C$ such that each of two mappings
\[f \colon \mathbb U_+ \to \mathbb R^2  \quad \mbox{and} \quad  f \colon \mathbb U_- \to \mathbb R^2\]
is $\CC^\infty$-smooth up to $\Gamma$. Assume that for some constant $0<m<\infty$ we have
\[\abs{Df(z)}\le m \quad \mbox{ and } \quad \det Df(z) \ge \frac{1}{m}\]
 on $\mathbb U_+$ and on $\mathbb U_-$. Thus $f \colon \mathbb U \to \R^2$ is in fact locally bi-Lipschitz.

\begin{proposition}\label{prop41}
Under the above conditions there is a constant $0<M< \infty$ such that for every open set $\mathbb V \subset \mathbb U$ containing $\Gamma$ one can find a homeomorphism $g \colon \overline{\mathbb U} \onto f(\overline{\mathbb U})$ which is a $\CC^\infty$-diffeomorphism in $\mathbb U$, with the following properties:
\begin{equation}
g(z)=f(z), \mbox{ for } z\in (\overline{\mathbb U} \setminus \mathbb V) \cup \Gamma
\end{equation}
\begin{equation}
\abs{Dg(z)} \le M \quad \mbox{and} \quad \det Dg(z) > \frac{1}{M} \mbox{ on } \mathbb U.
\end{equation}
\end{proposition}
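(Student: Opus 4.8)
The plan is to reduce to a flat model, rewrite $f$ along the crosscut as the integral of its transverse derivative, and then mollify that transverse derivative only in the normal direction. Using the diffeomorphism $\varphi\colon\C\onto\mathbb U$ I would transport the problem to coordinates in which $\Gamma=\varphi(\R)$ becomes the real axis and $\mathbb U_\pm=\varphi(\C_\pm)$ become the upper and lower half–planes, the two endpoints of $\Gamma$ corresponding to $x\to\pm\infty$; write $F=f\circ\varphi$. Composition with the fixed diffeomorphism $\varphi$ changes the constant $m$ only by a factor depending on $\varphi$, so the hypotheses $\abs{DF}\le m'$ and $\det DF\ge 1/m'$ persist on each side, and the given neighbourhood $\mathbb V$ pulls back to an open neighbourhood of $\R$. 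Since $F$ is Lipschitz and smooth up to $\R$ from each side, it is absolutely continuous on vertical segments and I may write $F(x,y)=\phi(x)+\int_0^y a(x,t)\,\dtext t$, where $\phi(x)=F(x,0)$ is the common trace on $\R$ and $a=\partial_y F$ is bounded and smooth off $\R$ but jumps across $\R$, with one–sided traces $a_+(x)$ and $a_-(x)$. The crease of $f$ along $\Gamma$ is exactly this jump.

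Next I would smooth the jump. Fix a strip width $\delta(x)>0$ and a normal scale $\theta(x)\in(0,\delta(x)]$, both smooth and tending to $0$ as $x\to\pm\infty$, with $\{\abs y<\delta(x)\}$ contained in the pulled–back $\mathbb V$. Let $\rho$ be an even mollifier supported in $[-1,1]$ and set $\tilde a(x,y)=\int a(x,y-\theta(x)s)\,\rho(s)\,\dtext s$ inside the strip $\{\abs y<\delta(x)\}$ and $\tilde a=a$ outside. I then define $g$ through $G(x,y)=\phi(x)+\int_0^y\tilde a(x,t)\,\dtext t$, after subtracting from $\tilde a$ a correction $c_\pm(x)\chi_\pm(y)$, where $\chi_\pm$ is a fixed bump supported near $y=\pm\delta(x)$ and $c_\pm(x)=\int_0^{\pm\delta}(\tilde a-a)$; this forces $G=F$ for $\abs y\ge\delta(x)$, hence $g\equiv f$ off $\mathbb V$, while automatically $G(x,0)=\phi(x)=F(x,0)$, i.e. $g\equiv f$ on $\Gamma$. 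Since $\tilde a$ is smooth and $\theta(x)>0$ inside the strip, $G$ is $\CC^\infty$.

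The crux is keeping the Jacobian positive after the smoothing, and here the geometry is decisive. On $\R$, orientation preservation from the two sides says $\phi'(x)\wedge a_+(x)=\det DF_+>0$ and $\phi'(x)\wedge a_-(x)=\det DF_->0$, so $a_+$ and $a_-$ lie in the same open half–plane bounded by $\R\,\phi'(x)$. Because this positivity condition is convex, the mollified value $\tilde a(x,0)$, being a weighted average of one–sided values of $a$ near the jump, still satisfies $\phi'(x)\wedge\tilde a(x,0)>0$; a short computation gives $\det DG(x,y)\ge 1/m'-O\!\big(\delta(x)\,\norm{D^2F}\big)$ throughout the strip, and away from $\R$ the estimate is inherited from $\det DF\ge 1/m'$. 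I then let $\delta(x)$ (and with it $\theta(x)$) vanish fast enough as $x\to\pm\infty$ to absorb the possibly unbounded second derivatives of $F$, securing $\det DG>1/M$ uniformly. For the upper bound, $\partial_y G=\tilde a$ is an average of $\partial_y F$, so $\abs{\partial_y G}\le m'$; and although $\partial_x G=\phi'+\int_0^y\partial_x\tilde a$ formally involves $D^2F$, the identity $\int_{-\theta s}^0 a\,\dtext\tau=\phi(x)-F(x,-\theta s)$ re-expresses $\partial_x G$ through convolutions of $\partial_x F$ alone, giving $\abs{\partial_x G}\le Cm'$ with $C$ depending only on $\rho$. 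Thus $\abs{DG}\le M$ and $\det DG>1/M$ with $M$ independent of $\mathbb V$, since shrinking $\mathbb V$ only forces a smaller $\theta$, which never worsens either bound.

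Finally, global injectivity follows from degree theory. Let $S$ be the strip where $G\neq F$. Then $G=F$ on $\partial S$ and $\det DG>0$ in $S$, so for every $w\notin F(\partial S)$,
\[\#\{z\in S\colon G(z)=w\}=\deg(G,S,w)=\deg(F,S,w)=\#\{z\in S\colon F(z)=w\}\le 1,\]
the last inequality because $F$ is a homeomorphism and the local degrees are all $+1$. Hence $G$ is injective on $S$; since $G\equiv F$ off $S$ and the two agree on $\partial S$, $G$ is a homeomorphism of the model domain onto $F(\overline{\mathbb U})$ and a $\CC^\infty$-diffeomorphism inside, and transporting back by $\varphi$ produces the desired $g$. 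I expect the main difficulty to be the uniform ellipticity bound near the endpoints of $\Gamma$, where the second derivatives of $f$ may blow up: the freedom to let the normal scale $\theta(x)$ and width $\delta(x)$ vanish there, together with the integration–by–parts identity that keeps $\abs{DG}$ controlled by first derivatives only, is precisely what makes $M$ independent of $\mathbb V$.
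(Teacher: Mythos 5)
Your overall route — straighten the crosscut by $\varphi$, write $F(x,y)=\phi(x)+\int_0^y a(x,t)\,\dtext t$, mollify the transverse derivative $a$ only in the normal direction at a scale $\theta(x)$ vanishing at the ends, and recover injectivity by a degree argument — is the same Munkres-style smoothing that the paper invokes (it gives no inline proof; it cites \cite{IKOhopf}, which follows \cite{Mu}). But two steps fail as written. First, the transfer of the bounds under $\varphi$ is false: $\varphi$ maps the unbounded plane onto the bounded domain $\mathbb U$, so $\det D\varphi\in \mathscr L^1(\C)$ and necessarily degenerates near the two ends; hence $\det DF=(\det Df\circ\varphi)\,\det D\varphi$ admits \emph{no} uniform lower bound $1/m'$, and conversely a uniform bound $\abs{DG}\le M$, $\det DG>1/M$ in the model chart is neither achievable nor what the proposition asserts, since the required inequalities concern $Dg=DG\cdot D\varphi^{-1}$ in the original chart, where $D\varphi^{-1}$ blows up at the ends. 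All of your estimates built on the uniform constant $m'$ (e.g. ``$\det DG\ge 1/m'-O(\delta\norm{D^2F})$'', ``$\abs{\partial_x G}\le Cm'$'') must instead be pointwise-local comparisons of $DG$ with $DF$ at nearby points, transported back using the local comparability of $D\varphi$; your decaying $\delta(x),\theta(x)$ give exactly the room to do this, but the bookkeeping as written is wrong. Relatedly, your Jacobian estimate omits the term produced by differentiating the variable scale: $\partial_x G$ contains $-\theta'(x)\int \rho(s)\,s\,[F_y(x,y-\theta s)-F_y(x,-\theta s)]\,\dtext s$, which near the axis is of size $\abs{\theta'}\,\abs{a_+-a_-}$ (the jump is \emph{not} small), so $\theta$ must also be slowly varying relative to the locally degenerating lower bound on $\det DF$ — an extra constraint you never impose, though an inf-convolution construction of $\theta$ below the envelope dictated by $\mathbb V$ does satisfy it.

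Second, and more decisively, your construction does not produce a $\CC^\infty$-diffeomorphism of $\mathbb U$: truncating the mollification sharply at $\abs{y}=\delta(x)$ and restoring only the \emph{value} of $G$ there via the bump correction $c_\pm(x)\chi_\pm$ leaves $\partial_y G=\tilde a-c_\pm\chi_\pm$ discontinuous across the curves $\{y=\pm\delta(x)\}$, because $\tilde a(x,\delta(x)^-)=a(x,\delta(x))+O(\theta^2)$ but not equal to it, and all higher normal derivatives mismatch as well. You have merely relocated the crease from $\Gamma$ to two nearby curves; the proposition demands $g\in\CC^\infty(\mathbb U)$, smooth across the edge of the strip too. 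The standard repair — used in \cite{Mu} and in the proof in \cite{IKOhopf} — is to make the smoothing scale a smooth function $\theta(x,y)$, positive near $\Gamma$ and vanishing to infinite order at the edge of the strip (equivalently, to blend $\tilde a$ with $a$ by a smooth cutoff in $y$ supported away from the jump, together with a smooth correction restoring $G=F$ outside), so that the modified map glues $C^\infty$ to $f$; with that change the interpolated transverse derivative stays within the convex half-plane condition $\phi'\wedge(\cdot)>0$ and your Jacobian argument goes through. Your degree-theoretic injectivity step is essentially sound, but you should also note that $g$ must extend to a homeomorphism of $\overline{\mathbb U}$ equal to $f$ at the endpoints of $\Gamma$, where the modified strip accumulates on $\partial\mathbb U$; this requires $\abs{G-F}\to 0$ at the ends, which your decaying $\theta$ does furnish but which the write-up never checks.
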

The key element of this smoothing device is that the constant $M$ is independent of the neighborhood $\mathbb V$ of $\Gamma$, see Figure~\ref{skull}. The proof is given in~\cite{IKOhopf} following the ideas of~\cite{Mu}.

\begin{center}
\begin{figure}[h]
\includegraphics[width=0.4\textwidth]{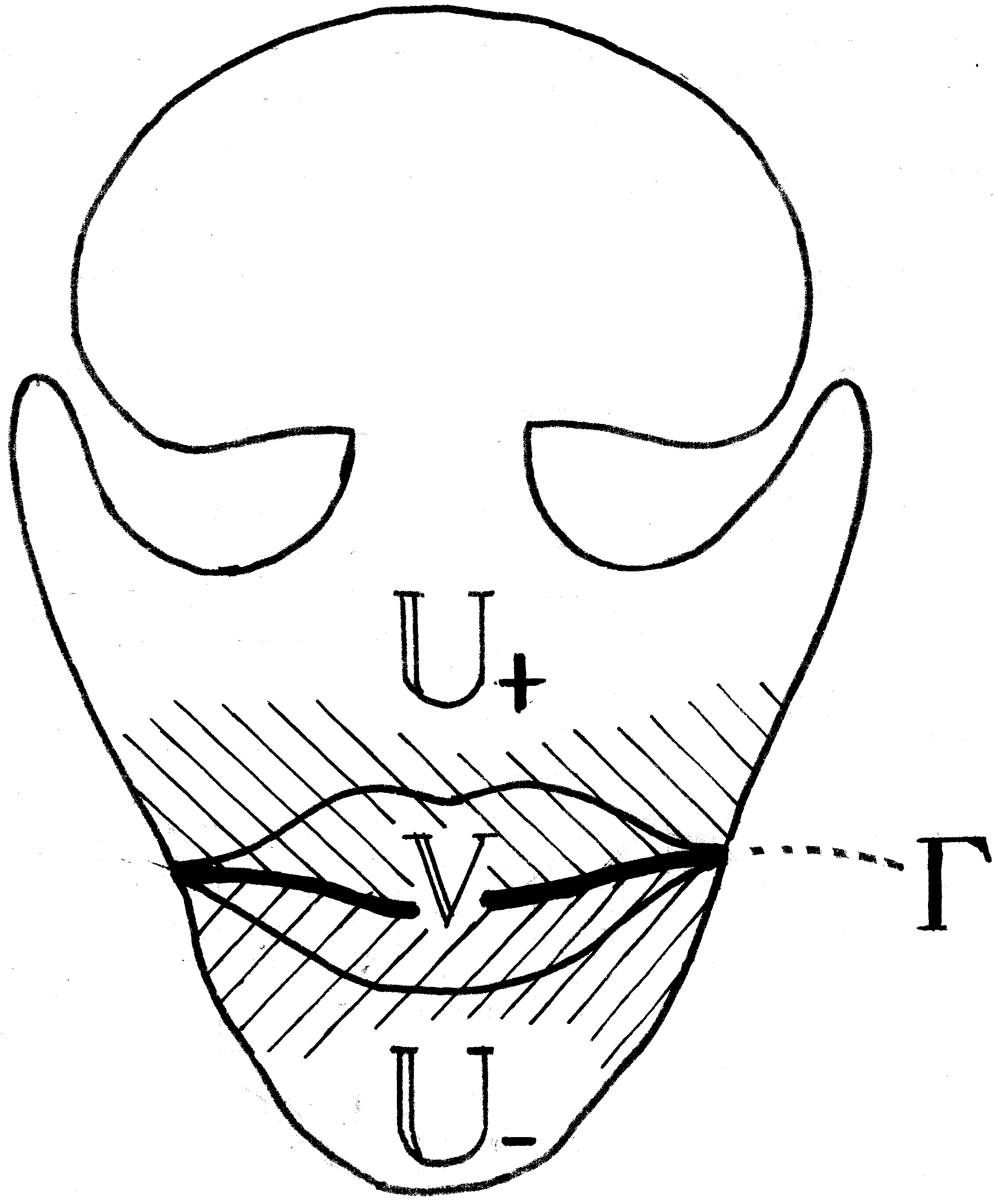}
\caption{Jordan domain with a crosscut $\Gamma$ and its neighborhood $\mathbb V$.}\label{skull}
\end{figure}
\end{center}

We shall recall similar smoothing device for cuts along Jordan curves. Let $\mathbb U$ be a simply connected domain with $\CC^\infty$-regular cut along a Jordan curve $\Gamma \subset \mathbb U$. This means there is a diffeomorphism $\varphi \colon \C \onto \mathbb U$ such that $\Gamma = \varphi (\mathbb S^1)$, $\mathbb S^1 =\{z \in \C \colon \abs{z}=1\}$. As before $\Gamma$ splits $\mathbb U$ into
\[
\begin{split}
\mathbb U_+ = \varphi (\mathbb D_+), \quad \mathbb D_+= \{z\colon \abs{z}<1\}\\
\mathbb U_- = \varphi (\mathbb D_-), \quad \mathbb D_-= \{z\colon \abs{z}>1\}.
\end{split}
\]
 Suppose we are given a homeomorphism $f \colon \mathbb U \to \mathbb R^2$ such that each of two mappings
\[
f \colon \mathbb U_+ \to \mathbb R^2  \quad \mbox{ and  } \quad  f \colon \mathbb U_- \to \mathbb R^2
\]
is $\CC^\infty$-smooth up to $\Gamma$. Assume that  for some constant $0<m<\infty$ we have
\[\abs{Df(z)}\le m \quad \mbox{ and } \quad \det Df(z) \ge \frac{1}{m}\]
 on $\mathbb U_+$ and $\mathbb U_-$.

\begin{proposition}\label{prop42}
Under the above conditions there is a constant $0<M< \infty$ such that for every open set $\mathbb V \subset \mathbb U$ containing $\Gamma$ one can find  a $\CC^\infty$-diffeomorphism $g \colon \mathbb U \onto f(\mathbb U)$ with the following properties
\begin{equation}
g(z)=f(z), \mbox{ for } z\in (\mathbb U \setminus \mathbb V) \cup \Gamma
\end{equation}
\begin{equation}
\abs{Dg(z)} \le M \quad \mbox{and} \quad \det Dg(z) > \frac{1}{M} \mbox{ on } \mathbb U.
\end{equation}
\end{proposition}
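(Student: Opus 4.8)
Throughout I would work in the standard annular model. Pulling back by the fixed $\CC^\infty$-diffeomorphism $\varphi \colon \C \onto \mathbb U$, I may assume $\Gamma = \mathbb S^1$, $\mathbb U_+ = \mathbb D_+ = \{\abs z <1\}$ and $\mathbb U_- = \mathbb D_- = \{\abs z >1\}$, with $f$ of class $\CC^\infty$ up to $\mathbb S^1$ from either side. On a compact collar of $\mathbb S^1$ the diffeomorphism $\varphi$ distorts the hypotheses $\abs{Df}\le m$, $\det Df \ge 1/m$ only by factors depending on $\varphi$; so it is enough to build the model map $g$ with a constant $M = M(m,\varphi)$, and then push it forward. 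Moreover every open $\mathbb V \supset \mathbb S^1$ contains a thin annulus $\{1-\delta <\abs z <1+\delta\}$, and producing $g \equiv f$ off this smaller set forces $g \equiv f$ off $\mathbb V$ as well; hence I may assume $\mathbb V$ is such a thin annular neighborhood, the whole point being that $M$ must not depend on $\delta$.

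Next I would flatten $\Gamma$. In the collar coordinates $z = (1+t)e^{i\theta}$ the curve $\Gamma$ becomes $\{t=0\}$ and $\theta$ runs over the \emph{compact} circle $\R/2\pi\Z$. Here $f$ carries two one-sided $\CC^\infty$ extensions $f_+$ (from $t<0$) and $f_-$ (from $t>0$) which agree on $\{t=0\}$, so that the tangential derivative $\partial_\theta f$ is already continuous across $\Gamma$ and only the normal $1$-jet may jump. I would then define $g$ by freezing $g = f$ on $\{t=0\}$ and for $\abs t \ge \eta$ (any $\eta<\delta$), and inside the strip $\abs t <\eta$ replacing the mismatched normal jet of $f$ by the Munkres interpolation of the normal derivatives of $f_+$ and $f_-$. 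This is exactly the device behind Proposition~\ref{prop41}; the crucial simplification is that $\mathbb S^1$ is a closed curve \emph{without endpoints}, so the interpolation is performed uniformly in the periodic variable $\theta$, with no tapering and hence no angular seams to reconcile.

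The heart of the matter, precisely as in the crosscut case, is to verify that $g$ is a $\CC^\infty$-diffeomorphism satisfying
\[\abs{Dg}\le M, \qquad \det Dg > \tfrac1M \qquad \text{on } \mathbb U,\]
with $M = M(m,\varphi)$ \emph{independent of the strip width $\eta$}, equivalently of $\mathbb V$ --- the feature singled out after Proposition~\ref{prop41}. Writing $Dg$ in collar coordinates, its tangential column equals that of $f$ on $\{t=0\}$, while the normal column is the interpolated normal derivative plus a term $\lambda'(t)\,[\,f_- - f_+\,]$ coming from the cutoff $\lambda$; since $f_- - f_+ = O(t)$ near $\Gamma$ and $\lambda' = O(1/\eta)$ on a strip of width $\eta$, this extra term is $O(1)$ uniformly in $\eta$, so $\abs{Dg}\le M$ is immediate. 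The genuine obstacle is the lower bound $\det Dg > 1/M$: a naive linear blend of $f_+$ and $f_-$ need not be orientation-preserving, and it is here that the Munkres scheme and the two-sided bounds $\abs{Df}\le m$, $\det Df \ge 1/m$ must be used to keep the Jacobian positive, uniformly as $\eta \to 0$. I expect this uniform lower Jacobian bound to be the main difficulty, identical in spirit to the corresponding step for crosscuts.

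Finally I would recover global injectivity from the boundary data. Since $g = f$ off the strip and on $\Gamma$, the map $g$ sends the two circles $\{t=\pm\eta\}$ onto the two boundary components of $f(\{\abs t \le \eta\})$ exactly as $f$ does, and keeps $f(\Gamma)$ fixed; as $\det Dg>0$ throughout, a degree argument on the closed annular strip shows $g$ maps it homeomorphically onto $f(\{\abs t \le \eta\})$ without folding. Splicing this with $g \equiv f$ elsewhere yields a global $\CC^\infty$-diffeomorphism $g \colon \mathbb U \onto f(\mathbb U)$ with $g = f$ on $(\mathbb U \setminus \mathbb V)\cup\Gamma$, and pushing forward by $\varphi$ gives the assertion for the original $\Gamma$ with $M$ depending only on $m$ and $\varphi$. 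Alternatively, one could reduce directly to Proposition~\ref{prop41} by covering $\mathbb S^1$ with finitely many sub-arcs, each a crosscut of a thin curved rectangle inside $\mathbb V$, and smoothing across them one at a time; the cost of that route is the $\CC^\infty$-matching across the angular seams, which is why I would prefer the seamless closed-curve construction above.
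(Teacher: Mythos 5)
Your framework is the right one, and it matches the paper's: the paper gives no in-text proof of Proposition~\ref{prop42} at all, but \emph{recalls} it from the companion paper \cite{IKOhopf}, where the smoothing device is constructed following Munkres \cite{Mu}. Your reductions are all sound and agree with that route: passing to the round model by $\varphi$ with constants absorbed into $M$, replacing $\mathbb V$ by a thin annulus $\{|t|<\eta\}$ in collar coordinates $z=(1+t)e^{i\theta}$, the continuity of $\partial_\theta f$ across $\{t=0\}$, the cancellation $\lambda'=O(1/\eta)$ against $f_--f_+=O(t)$ that makes the \emph{upper} bound $|Dg|\le M$ uniform in $\eta$, the degree argument recovering injectivity from the boundary data, and the observation that a closed curve spares you the endpoint tapering that complicates the crosscut case. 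None of this is in dispute.

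The genuine gap is exactly where you stop: the $\mathbb V$-independent lower bound $\det Dg>1/M$ \emph{is} the proposition, and your proposal invokes ``the Munkres interpolation'' as a black box without constructing it, while the only concrete formula you write down --- the cutoff blend with derivative $(1-\lambda)Df_+ +\lambda Df_- +\lambda'(f_--f_+)\otimes e_t$ --- is the one that fails. It fails necessarily, not just possibly: as $\eta\to0$ the normal column tends to $(1-\mu)\partial_t f_+ +\mu\,\partial_t f_-$ with $\mu(s)=\bigl(s\Lambda(s)\bigr)'$, and since $\int \mu$ over the transition interval equals $1$ while $\mu$ vanishes at its left end, $\mu$ must exceed $1$ somewhere; the determinant is affine in the normal column, so there $\det Dg=\mu\det(T,N_-)-(\mu-1)\det(T,N_+)$, which is negative whenever the one-sided Jacobians are sufficiently disparate --- and $|Df|\le m$, $\det Df\ge 1/m$ permit a disparity of order $m^3$. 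The missing idea, which is the actual content of \cite{Mu} and of the proof in \cite{IKOhopf}, is a normalization: compose with the inverse of a diffeomorphic extension $\tilde f_+$ of $f_+$ across $\Gamma$, reducing to the case where the map is the identity on one side and fixes $\{t=0\}$ pointwise; then the splice can be arranged so that positivity of the Jacobian reduces to positivity of the normal derivative of a single scalar interpolation, uniformly as $\eta\to0$, and the bounds for $g$ are recovered upon composing back with $\tilde f_+$. A second, smaller gap: interpolating only the first normal derivatives makes $g$ merely $\CC^1$ across $\Gamma$; for $\CC^\infty$ you must let $g$ coincide with one fixed smooth extension (say $\tilde f_+$) on a neighborhood of $\{t=0\}$ inside the strip, so that all jets match. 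Finally, the alternative you set aside is in fact the cheapest complete argument available to you, since Proposition~\ref{prop41} is already on record: cover $\mathbb S^1$ by two overlapping sub-arcs, each a crosscut of a thin Jordan neighborhood inside $\mathbb V$, and apply Proposition~\ref{prop41} twice. The ``angular seams'' you fear do not arise, because each application returns a map smooth throughout its Jordan neighborhood, unchanged on the crosscut itself and off the chosen thin set; the second application then sees one-sidedly smooth data with constants controlled by $M_1(m)$, and the composite bound $M=M_2(M_1)$ is independent of $\mathbb V$, as required.
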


Having disposed of the above preliminaries we shall now proceed to the construction of the approximating sequence of diffeomorphisms.

\section{The proof}

\subsection{Scheme of the proof.}

Let us begin with a convention. We will often suppress the explicit dependence on  the Sobolev exponent $1<p< \infty$ in the notation, whenever it becomes selfexplanatory. For every $\epsilon >0$ we shall construct a $\CC^\infty$-diffeomorphism $\hslash \colon \X \onto \Y$ such that
\begin{enumerate}[(A)]
\item $\hslash -h \in \mathcal A_\circ (\X) $ \vspace{0.1cm}
\item $\norm{\hslash -h}_{\CC(\X)} \le \epsilon$ \vspace{0.1cm}
\item $\norm{\nabla \hslash -\nabla h}_{\mathscr L^p (\X)} \le \epsilon$ \vspace{0.1cm}
\item $\EE_{\X} [\hslash] \le \EE_{\X} [h]$ \vspace{0.1cm}
\item If $h$ is  a $\CC^\infty$-diffeomorphism outside of a compact subset of $\X$, then there exist a compact subset of $\X$ outside of which we have  $\hslash \equiv h$, for all $\epsilon >0$.
\end{enumerate}

We may and do assume that $h$ is not a $\CC^\infty$-diffeomorphism, since otherwise $\hslash =h$ satisfies the desired properties. Let $x_\circ \in \X$ be a point such that $h$ fails to be $\CC^\infty$-diffeomorphism in any neighborhood of $x_\circ$.

 We shall consider dyadic squares in $\Y$ with respect to a selected rectangular coordinate system in $\R^2$. By choosing the origin of the system we ensure that $h(x_\circ)$ does not lie on the boundary of any dyadic square.

 Let us  fix $\epsilon >0$. The construction of $\hslash$ proceeds in $5$ steps, each of which gives a homeomorphism $\hslash_k \colon \X \onto \Y$, $k=0,1, \dots , 5$, in the Sobolev class $\W^{1,p}_{\loc} (\X, \Y)$ such that $\hslash_0=h$, $\hslash_k \in \hslash_{k-1} +  \mathcal A_\circ (\X) $, $k=1,\dots, 5$ and $\hslash_5=\hslash$ is the desired diffeomorphism. For each $k=1,2, \dots , 5$ we will secure conditions analogous to (A)-(E). Namely,
\begin{enumerate}[($A_k$)]
\item $\hslash_k -\hslash_{k-1} \in\mathcal  A_\circ (\X) $ \vspace{0.1cm}
\item $\norm{\hslash_k -\hslash_{k-1}}_{\CC(\X)} \le \epsilon/5$ \vspace{0.1cm}
\item $\norm{\nabla \hslash_k -\nabla \hslash_{k-1}}_{\mathscr L^p (\X)} \le \epsilon/5$ \vspace{0.1cm}
\item $\norm{\nabla \hslash_1}_{\mathscr L^p (\X)}  \le \norm{\nabla \hslash_{0}}_{\mathscr L^p (\X)} - 2 \delta$, for some $\delta >0$; \\
  $\norm{\nabla \hslash_k}_{\mathscr L^p (\X)}  \le \norm{\nabla \hslash_{k-1}}_{\mathscr L^p (\X)}$, for $k=2,4$; \\
  $\norm{\nabla \hslash_k}_{\mathscr L^p (\X)} \le \norm{\nabla \hslash_{k-1}}_{\mathscr L^p (\X)} + \delta $, for $k=3,5$ \vspace{0.1cm}
\item  If $h_{k-1}$ is  a $\CC^\infty$-diffeomorphism outside of a compact subset of $\X$, then there exists a compact subset in $\X$ outside which we have  $\hslash_k \equiv \hslash_{k-1}$ for all $\epsilon >0$.
\end{enumerate}

\subsection{Partition of $\X$ into  cells}
Let us distinguish one particular Whitney type partition of $\Y$ and keep it fixed for the rest of our arguments.
\[\Y= \bigcup_{\nu =1}^\infty \overline{\Y_\nu}, \]
 where $\Y_\nu$ are mutually disjoint open dyadic squares such that
 \[\diam \Y_\nu \le \dist (\Y_\nu , \partial \Y) \le 3 \diam \Y_\nu \quad \mbox{for } \nu =1,2, \dots\]
unless $\Y=\R^2$, in which case $\Y_\nu$ are unit squares.
Thus the cover of $\Y$ by $\overline{\Y_\nu}$ is locally finite. The preimages
\[\X_\nu = h^{-1} (\Y_\nu), \qquad \nu =1,2, \dots\]
are Jordan domains which we call {\it  cells} in $\X$.
In the forthcoming Step~1 we shall need to further divide each cell into a finite number of {\it daughter cells} in $\X$. Note that  all but finite number of cells
$\X_\nu$, $\nu=1,2,...$ lie outside a given compact subset of $\X$.

\section*{Step 1}
To avoid undue indexing  in the forthcoming division of cells, we shall argue in two substeps.

\subsection*{Step 1a.} Examine one of the  cells in $\X$, say $\mathfrak X = \X_\nu$, for some fixed  $\nu=1,2, \dots$. Call it a {\it parent cell}. Thus $h(\mathfrak X) = \Upsilon$ is the corresponding Whitney square $\Upsilon= \Y_\nu \subset \Y$. To every $n=1,2, \dots$, there corresponds a partition of $\Upsilon$ into $4^n$-dyadic congruent  squares $\Upsilon_i$, $i=1, \dots, 4^n$
\[\overline{\Upsilon} = \overline{\Upsilon_1} \cup \dots \cup \overline{\Upsilon_{4^n}}.  \]
This gives rise to a division of $\mathfrak X$ into  daughter  cells $\mathfrak X_i = h^{-1} (\Upsilon_i)$
\[\overline{\mathfrak X} = \overline{\mathfrak X_1} \cup \overline{\mathfrak X_2} \cup \dots \cup \overline{\mathfrak X_{4^n}}.\]
We look at the homeomorphisms
\[h \colon \overline{\mathfrak X_i} \onto \overline{\Upsilon_i}, \qquad i=1,2, \dots 4^n\]
By virtue of Proposition~\ref{proreplace} we may replace them with $p$-harmonic homeomorphisms
\[\widetilde h_i = {\bf R}_{\mathfrak X_i} h \colon  \overline{\mathfrak X_i}  \onto  \overline{\Upsilon_i}, \qquad i=1,2, \dots , 4^n  \]
which coincide with $h$ on $\partial  {\mathfrak X_i} $. This procedure may not be necessary if $h \colon \mathfrak X_i \to \Upsilon_i$ is already a $\CC^\infty$-diffeomorphism.
In such cases we always use the \emph{trivial replacement} $\widetilde h_i=h$.
After all such replacements are made, we arrive at a homeomorphism
\[\widetilde h \colon  \overline{\mathfrak X} \onto  \overline{\Upsilon}  \]
which is a $\CC^\infty$-diffeomorphism in each cell $\mathfrak X_i$ and coincides with $h$ on $\partial \mathfrak X_i$. Obviously,
\[\widetilde h=h+ \sum_{i=1}^{4^n} [\widetilde h_i-h]_\circ \in h+ \mathcal A_\circ (\mathfrak X)\]
where  $ [\widetilde h_i -h]_\circ $ stands for zero extension of $\widetilde h_i-h$ outside $\mathfrak X_i$ and, therefore, belongs to $\mathcal A_\circ (\mathfrak X_i)$. Furthermore, by principle of minimal $p$-harmonic energy, we have
\[\EE_{\mathfrak X} [\widetilde h] =   \sum_{i=1}^{4^n} \EE_{\mathfrak X_i} [\widetilde h_i] \le   \sum_{i=1}^{4^n} \EE_{\mathfrak X_i} [{h}] = \EE_{\mathfrak X} [{h}] . \]
The eventual aim is to fix the number of daughter cells in $\mathfrak X$. For this we vary $n$ and look closely at the resulting homeomorphisms,
denoted by $f_n$. This sequence of mappings  is bounded in $\mathcal A (\mathfrak X)$. It actually converges to $h$ uniformly on $\overline{\mathfrak X}$. Indeed, given any point $x\in \overline{\mathfrak X}$, say $x\in \overline{\mathfrak X_i}$, for some $i=1,2, \dots, 4^n$, we have
\[\abs{f_n(x)-h(x)  } =   \abs{\widetilde h_i(x)-h(x)  } \le \diam \Upsilon_i = 2^{-n} \diam \Upsilon.\]
Thus
\[\lim_{n \to \infty}  f_n =h, \quad \mbox{ uniformly in } \overline{\mathfrak X}.\]
 On the other hand the mappings $f_n$ are bounded in the Sobolev space $\W^{1,p} (\mathfrak X)$, so converge to $h$ weakly in $\W^{1,p} (\mathfrak X)$. The key observation now is that
 \[ \norm{\nabla h}_{\mathscr L^p (\mathfrak X)}  \le \liminf_{n \to \infty} \norm{\nabla f_n}_{\mathscr L^p (\mathfrak X)}\le \norm{\nabla h}_{\mathscr L^p (\mathfrak X)} \]
 because of convexity of the energy functional. This gives
 \[\lim_{n \to \infty} \norm{\nabla f_n}_{\mathscr L^p (\mathfrak X)}   =  \norm{\nabla h}_{\mathscr L^p (\mathfrak X)} \]
 Then, the usual application of Clarkson's inequalities in $\mathscr L^p$-spaces, $1<p<\infty$, yields
 \[\lim_{n \to \infty} \norm{\nabla f_n-\nabla h}_{\mathscr L^p (\mathfrak X)} =0 \]
 meaning that $f_n -h \to 0$ in the norm topology of $\mathcal A(\mathfrak X)$. We can now determine the number $n=n_\nu=n(\mathfrak X)$, simply requiring the division of $\mathfrak X$  be fine enough to satisfy two conditions.
 \begin{equation}\label{eqstar}
 \begin{cases}
 \diam \Upsilon_i = 2^{-n} \diam \Upsilon \le \epsilon /5, \quad i=1, \dots, 4^n\\
 \norm{\nabla f_n-\nabla h}_{\mathscr L^p (\mathfrak X)}  \le \frac{\epsilon}{5 \cdot 2^\nu}
 \end{cases}
 \end{equation}
 where we recall that $\mathfrak X$ stands for $\X_\nu$.

\subsection*{Step 1b.} Now, having $n=n_\nu$ fixed for each  cell $\mathfrak X_\nu$, we   construct our first approximating mapping
 \[\hslash_1 \colon \X \onto \Y\]
 by setting
\[\hslash_1 := h + \sum_{\nu =1}^\infty [f_{n_\nu}-h]_\circ \in  h + \mathcal A_\circ (\mathbb X)\]
where, as always, $[f_{n_\nu}-h]_\circ$ stands for the zero extension of $f_{n_\nu}-h$ outside $\X_\nu$. This mapping is a $\CC^\infty$-diffeomorphism in every daughter cell.  Clearly, we have the condition
\begin{equation}
\hslash_1 -h \in \mathcal A_\circ (\X). \tag{$A_1$}
\end{equation}
Moreover, by the condition in~\eqref{eqstar} imposed on every $n_\nu$,
\begin{equation}
\norm{\hslash_1 -h }_{\CC(\X)} \le \sup_{\nu=1,2, \dots} \{\diam \Upsilon_i \colon \Upsilon_i \subset \Y_\nu, i=1, \dots , 4^{n_\nu}  \} \le \frac{\epsilon}{5} \tag{$B_1$}
\end{equation}
and
\begin{equation}
\norm{\nabla  \hslash_1- \nabla h}^p_{\mathscr L^p (\X)} = \sum_{\nu=1}^\infty \norm{\nabla  \hslash_1- \nabla h}^p_{\mathscr L^p (\X_\nu)}   \le  \left(\frac{\epsilon}{5}\right)^p \sum_{\nu=1}^\infty  \frac{1}{2^{\nu p}} <  \left(\frac{\epsilon}{5}\right)^p. \tag{$C_1$}
\end{equation}
Regarding condition $(D_1)$, we observe that summing up the energies over all daughter cells $\mathfrak X_i \subset \X_\nu$, $i=1,2, \dots 4^{n_\nu}$ and $\nu =1,2, \dots$, gives the total energy of $\hslash_1$ not larger than that of $h$. Even more, since $h$ fails to be a $\CC^\infty$-diffeomorphism in at least one of these cells,
the $p$-harmonic replacement takes place in this cell and, consequently, $\hslash_1$ has strictly smaller energy. Hence
\begin{equation}
\norm{\nabla \hslash_1}_{\mathscr L^p (\X)}    \le \norm{\nabla h}_{\mathscr L^p (\X)}   - 2 \delta, \quad \mbox{ for some } \delta >0.\tag{$D_1$}
\end{equation}
Regarding condition $(E_1)$, we note that under the assumption therein we made only a finite number of nontrivial ($p$-harmonic) replacements.
The same remark will apply to the subsequent steps and will not be mentioned again. The step 1 is complete.


 Before proceeding to Step 2, let us put all daughter cells in $\X$  in a single sequence
 \[\mathfrak X^1 , \mathfrak X^2 , \dots \subset \X .\]
 Thus from now on the daughter cells from different parents are indistinguishable as far as the mapping $\hslash_1$ is concerned. The point is that $\hslash_1$ is a $\CC^\infty$-diffeomorphism in every such cell, a property that will be pertinent to all new cells coming later either by splitting or merging the existing cells.  Note that the images $\Upsilon^\alpha = h(\mathfrak X^\alpha)$, $\alpha = 1,2, \dots$, form a partition of $\Y$ into dyadic squares
 \[\Y=\bigcup_{\alpha =1}^\infty  \overline{\Upsilon^\alpha}, \qquad \mbox{ where }\quad \diam \Upsilon^\alpha \le \frac{\epsilon}{5}. \]

\begin{center}
\begin{figure}[h]
\includegraphics[width=0.9\textwidth]{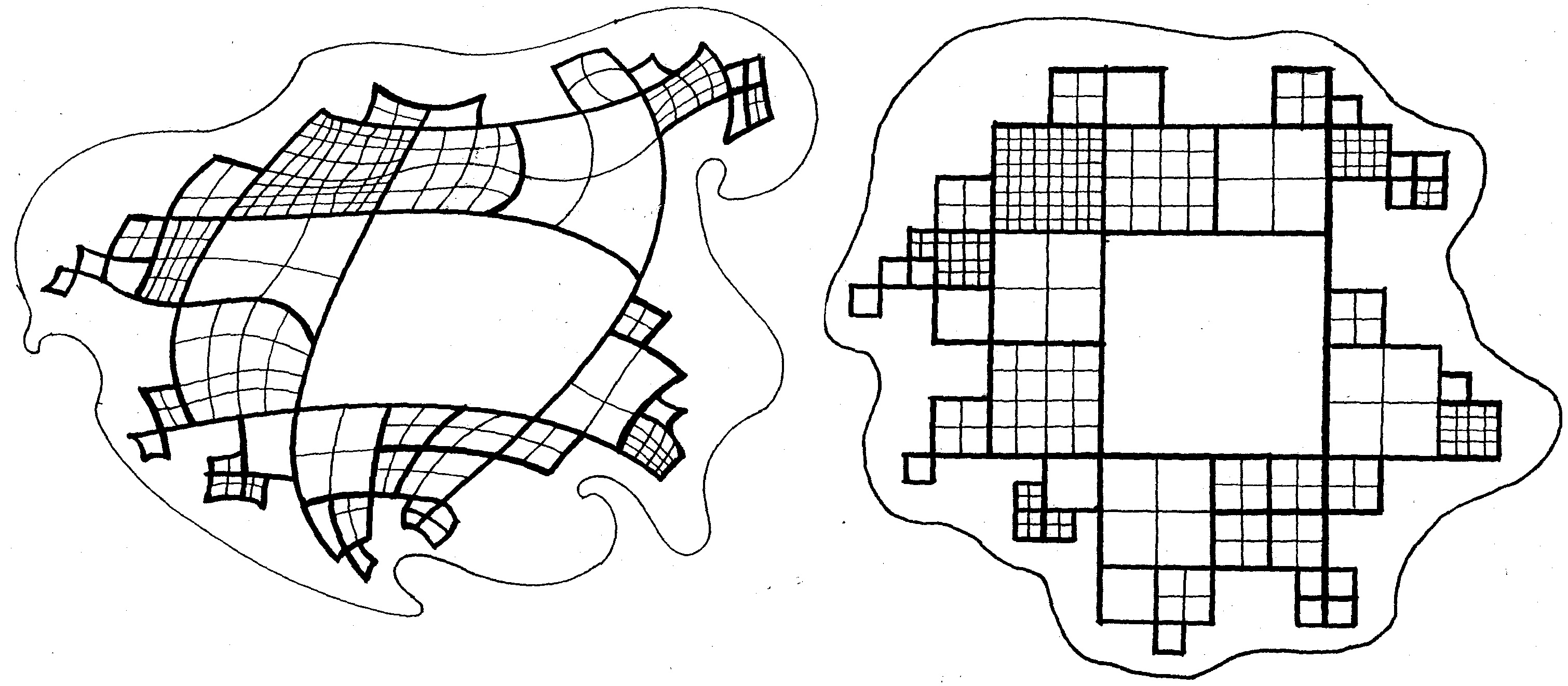}
\caption{ $\hslash_1$ is a $\CC^\infty$-diffeomorphism in each cell $\mathfrak X^\alpha \subset \X$.}
\end{figure}
\end{center}

\section*{Step 2}

\subsection*{Step 2a.} (Adjacent cells) Let $\mathcal C(\Y) \subset \Y$ be the collection of all corners of dyadic squares $\Upsilon^\alpha$, $\alpha =1,2, \dots$, and $\V (\X) \subset \X$ denote the set of their preimages under $h$, called {\it vertices of  cells}. Whenever two closed cells $\overline{\mathfrak X^\alpha}$ and $\overline{\mathfrak X^\beta}$, $\alpha \ne \beta$, intersect, their common part is either a point in $\V (\X)$ or an edge, that is, a closed Jordan arc with endpoints in $\V(\X)$. In this latter case we say that ${\mathfrak X^\alpha}$ and ${\mathfrak X^\beta}$ are adjacent cells with common edge
\[\overline{C^{\alpha \, \beta}} = \overline{\mathfrak X^\alpha} \cap \overline{\mathfrak X^\beta}.\]
This is the closure of a Jordan open arc
${C^{\alpha \, \beta}} = \overline{C^{\alpha \, \beta}} \setminus \V (\X).$
The mappings
\[\hslash_1 \colon \mathfrak X^\alpha \onto \Upsilon^\alpha \quad \mbox{and} \quad  \hslash_1 \colon \mathfrak X^\beta \onto \Upsilon^\beta \]
are $\CC^\infty$-diffeomorphisms but they do not necessarily match smoothly along the edges. We shall now produce a new cell $\mathfrak X^{\alpha \, \beta}$, a daughter of the adjacent cells $\mathfrak X^\alpha$ and $\mathfrak X^\beta$, such that
\[C^{\alpha \, \beta} \subset \mathfrak X^{\alpha \, \beta} \subset  \mathfrak X^{\alpha} \cup  C^{\alpha \, \beta} \cup  \mathfrak X^{\beta} .\]
To construct $ \mathfrak X^{\alpha \, \beta}$ we look at the adjacent dyadic squares $\overline{\Upsilon^\alpha}$ and $\overline{\Upsilon^\beta}$ in $\Y$. The intersection $\overline{\Upsilon^\alpha} \cap \overline{\Upsilon^\beta}=h(\overline{C^{\alpha \, \beta}}) $ is a closed interval. Let $R$ be a number greater than the length of  $h({C^{\alpha \, \beta}})$ to be chosen sufficiently large later on. There exist exactly two open disks of radius $R$ for which  $h({C^{\alpha \, \beta}})$ is a chord. Their intersection, denoted by $\mathcal L^{\alpha \, \beta}$, is a symmetric doubly convex lens of curvature $R^{-1}$. Thus $\mathcal L^{\alpha \, \beta}$ is enclosed between two open circular arcs $\gamma^{\alpha \, \beta} = \Upsilon^\alpha \cap \partial \mathcal L^{\alpha \, \beta} \subset \Upsilon^\alpha$ and  $\gamma^{\beta \, \alpha} = \Upsilon^\beta \cap \partial \mathcal L^{\alpha \, \beta} \subset \Upsilon^\beta$. Note that $ \mathcal L^{\alpha \, \beta} =  \mathcal L^{\beta \, \alpha}$, but $\gamma^{\alpha \, \beta} \ne \gamma^{\beta \, \alpha}$. We call
\begin{equation}\label{eq3stars}
\mathfrak X^{\alpha \, \beta}= \hslash_1^{-1} (\mathcal L^{\alpha \, \beta}), \quad \mbox{a daughter of the adjacent cells } \mathfrak X^\alpha \mbox{ and } \mathfrak X^\beta.
\end{equation}
As the curvature of the lens $ \mathcal L^{\alpha \, \beta}$ approaches zero,  the area of $ \mathfrak X^{\alpha \, \beta} $ tends to $0$. This allows us to choose $R$ so that
\begin{equation}\label{eq2stars}
\norm{\nabla \hslash_1}_{\mathscr L^p (\mathfrak X^{\alpha \, \beta})} \le \frac{\epsilon}{5 \cdot 2^{\alpha + \beta}}.
\end{equation}
The lenses $ \mathcal L^{\alpha \, \beta} $ are disjoint because the opening angle of each lens (the angle between arcs at their common endpoints) is at most $\pi/3$ and their long axes are either parallel or orthogonal, see Figure~\ref{lens}. Therefore, the  cells $\mathfrak X^{\alpha \, \beta}= \hslash_1^{-1} ( \mathcal L^{\alpha \, \beta} )$  are also disjoint. However, their closures may have a common point that lies in $\V (\X)$. The boundary of $\mathfrak X^{\alpha \, \beta}$ consists of two open arcs
\[\Gamma^{\alpha \, \beta} = \mathfrak X^\alpha \cap \partial \mathfrak X^{\alpha \, \beta} \quad \mbox{and} \quad \Gamma^{\beta \, \alpha} = \mathfrak X^\beta \cap \partial \mathfrak X^{\alpha \, \beta} \]
plus their endpoints. These open arcs are $\CC^\infty$-smooth because they come as images of the  circular arcs enclosing the lens $\mathcal L^{\alpha \, \beta}$ under   a $\CC^\infty$-diffeomorphism.

\begin{center}
\begin{figure}[h]
\includegraphics[width=0.9\textwidth]{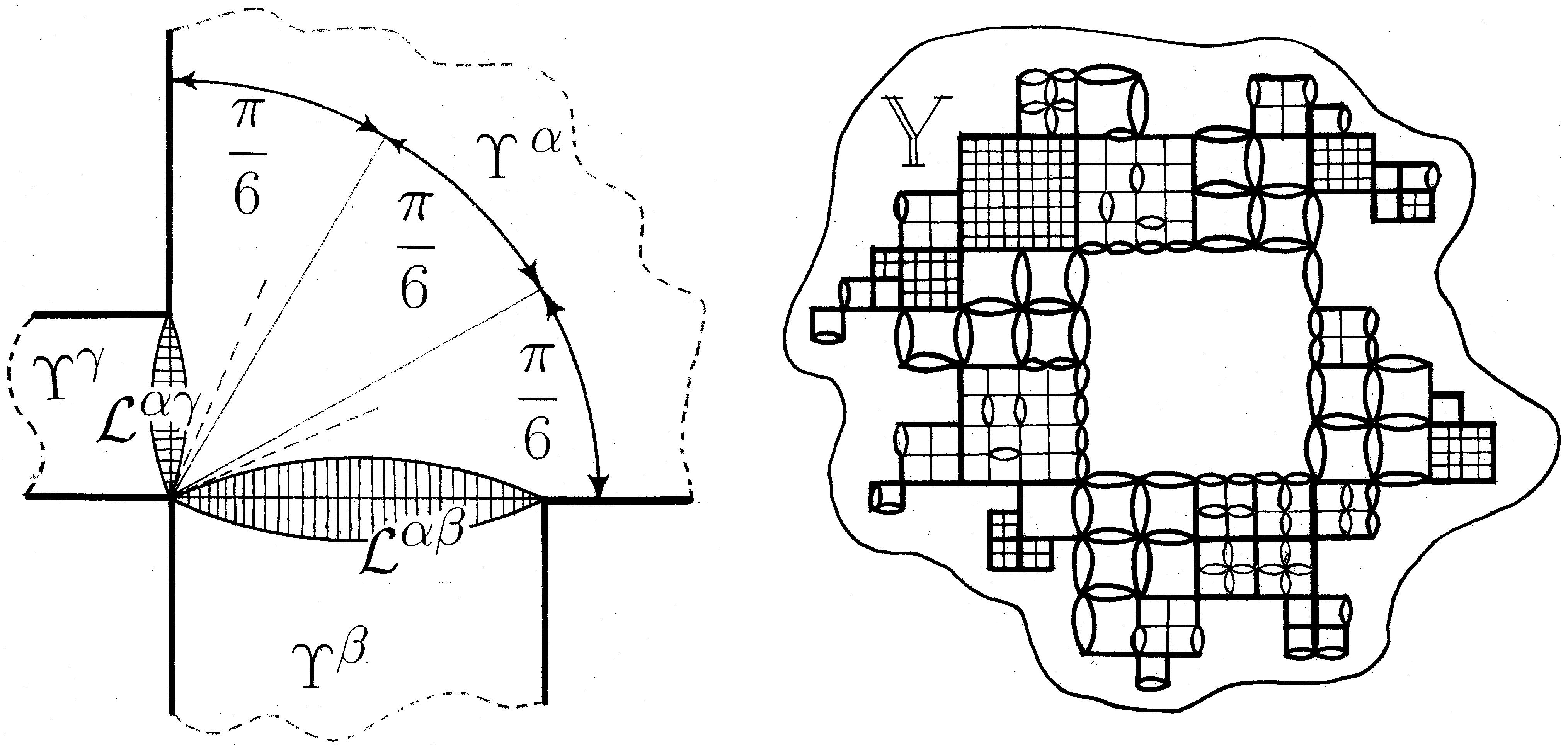}
\caption{Lenses.}\label{lens}
\end{figure}
\end{center}

\begin{remark}
In what follows we shall consider only the pairs $(\alpha, \beta)$ of indices $\alpha =1,2, \dots$ and $\beta =1,2, \dots$ which correspond to adjacent cells. Such pairs will be designated the symbol $\alpha \beta$.
\end{remark}

\subsection*{Step 2b.} (Replacements in $\mathfrak X^{\alpha \, \beta}$) The lenses $\mathcal L^{\alpha \, \beta} \subset \Y$ are convex, so with the aid of Proposition~\ref{proreplace} and Theorem~\ref{RKCthm}, we may replace $\hslash_1 \colon \mathfrak X^{\alpha \, \beta} \to \mathcal L^{\alpha \, \beta}$ with the $p$-harmonic extension of $\hslash_1 \colon \partial \mathfrak X^{\alpha \, \beta} \to \partial \mathcal L^{\alpha \, \beta}$. We do this, and denote the result by $\hslash_2^{\alpha \beta} \colon \mathfrak X^{\alpha \beta} \to \mathcal L^{\alpha \beta}$, only on the cells in which $\hslash_1\colon \mathfrak X^\alpha\cup \mathfrak X^\beta \cup \mathfrak X^{\alpha \beta}\to \R^2$ is not a $\CC^\infty$-diffeomorphism. In other cells we set $\hslash_2^{\alpha \beta} = \hslash_1$.  In either case $\hslash_2^{\alpha \beta}  \in \hslash_1 + \mathcal A_\circ (\mathfrak X^{\alpha \, \beta})$ so we define
\[\hslash_2=\hslash_1 + \sum_{\alpha \beta} [\hslash^{\alpha \, \beta}_2 - \hslash_1]_\circ .  \]
Thus we have
\begin{equation}
\hslash_2-\hslash_1 \in \mathcal A_\circ (\X). \tag{$A_2$}
\end{equation}
The advantage of using  $\hslash_2$ in the next step lies in the fact that it  is not only a $\CC^\infty$-diffeomorphism in every cell, but also  is $\CC^\infty$-smooth with positive Jacobian determinant, up to each edge of the cells created here. These edges are $\CC^\infty$-smooth open arcs.  By cells created here we mean not only $\mathfrak X^{\alpha \, \beta}$ but also those obtained from the parent cell $\mathfrak X^\alpha$ by removing the adjacent daughters; that is,
\[\mathfrak X^\alpha \setminus \bigcup_{\alpha \beta} \mathfrak X^{\alpha \, \beta}, \qquad \alpha =1,2, \dots\]
See Figure~\ref{cells}. The estimates of $\hslash_2$ run as follows. By~\eqref{eqstar} we have,
\begin{equation}
\norm{\hslash_2- \hslash_1}_{\CC (\X)} \le \sup\limits_{\alpha  \beta} \{\diam \mathcal L^{\alpha \, \beta}\} \le \sup\limits_{\alpha} \{ \diam \Y^\alpha \} \le \frac{\epsilon}{5}. \tag{$B_2$}
\end{equation}In view of the minimum $p$-harmonic energy principle, we have
\[
\begin{split}
\norm{\nabla \hslash_2 - \nabla \hslash_1}_{\mathcal L^p (\X)} & = \sum_{\alpha  \beta } \norm{\nabla \hslash_2 - \nabla \hslash_1}_{\mathcal L^p (\cup \mathfrak X^{\alpha \, \beta})} \\
& \le  \sum_{\alpha  \beta } \left[ \norm{\nabla \hslash_2}_{\mathcal L^p ( \mathfrak X^{\alpha \, \beta})} + \norm{\nabla \hslash_1}_{\mathcal L^p ( \mathfrak X^{\alpha \, \beta})}      \right] \\
& \le 2   \sum_{\alpha  \beta }  \norm{\nabla \hslash_1}_{\mathcal L^p ( \mathfrak X^{\alpha \,  \beta})}  \le  \frac{2\epsilon}{5}   \sum_{\alpha  \beta }  2^{-\alpha-\beta }.
\end{split}
\]
by~\eqref{eq2stars}. Hence
\begin{equation}
\norm{\nabla  \hslash_2 - \nabla \hslash_1}_{\mathcal L^p (\X)} \le \frac{\epsilon}{5}. \tag{$C_2$}
\end{equation}
The minimum energy principle also yields estimate
\[
\begin{split}
\norm{\nabla  \hslash_2}^p_{\mathcal L^p (\X)} & = \norm{\nabla \hslash_2}^p_{\mathcal L^p (\cup \mathfrak X^{\alpha \, \beta})} + \norm{\nabla \hslash_1}^p_{\mathcal L^p (\X \setminus \cup \mathfrak X^{\alpha \, \beta})} \\
& \le  \norm{\nabla \hslash_1}^p_{\mathcal L^p (\cup \mathfrak X^{\alpha \, \beta})} + \norm{\nabla \hslash_1}^p_{\mathcal L^p (\X \setminus \cup \mathfrak X^{\alpha \, \beta})} =   \norm{\nabla \hslash_1}^p_{\mathcal L^p (\X)}.
\end{split}
\]
In particular
\begin{equation}
 \norm{\nabla \hslash_2}_{\mathcal L^p (\X)} \le  \norm{\nabla \hslash_1}_{\mathcal L^p (\X)}, \tag{$D_2$}
\end{equation}
completing the proof of Step 2.

\begin{center}
\begin{figure}[h]
\includegraphics[width=0.7\textwidth]{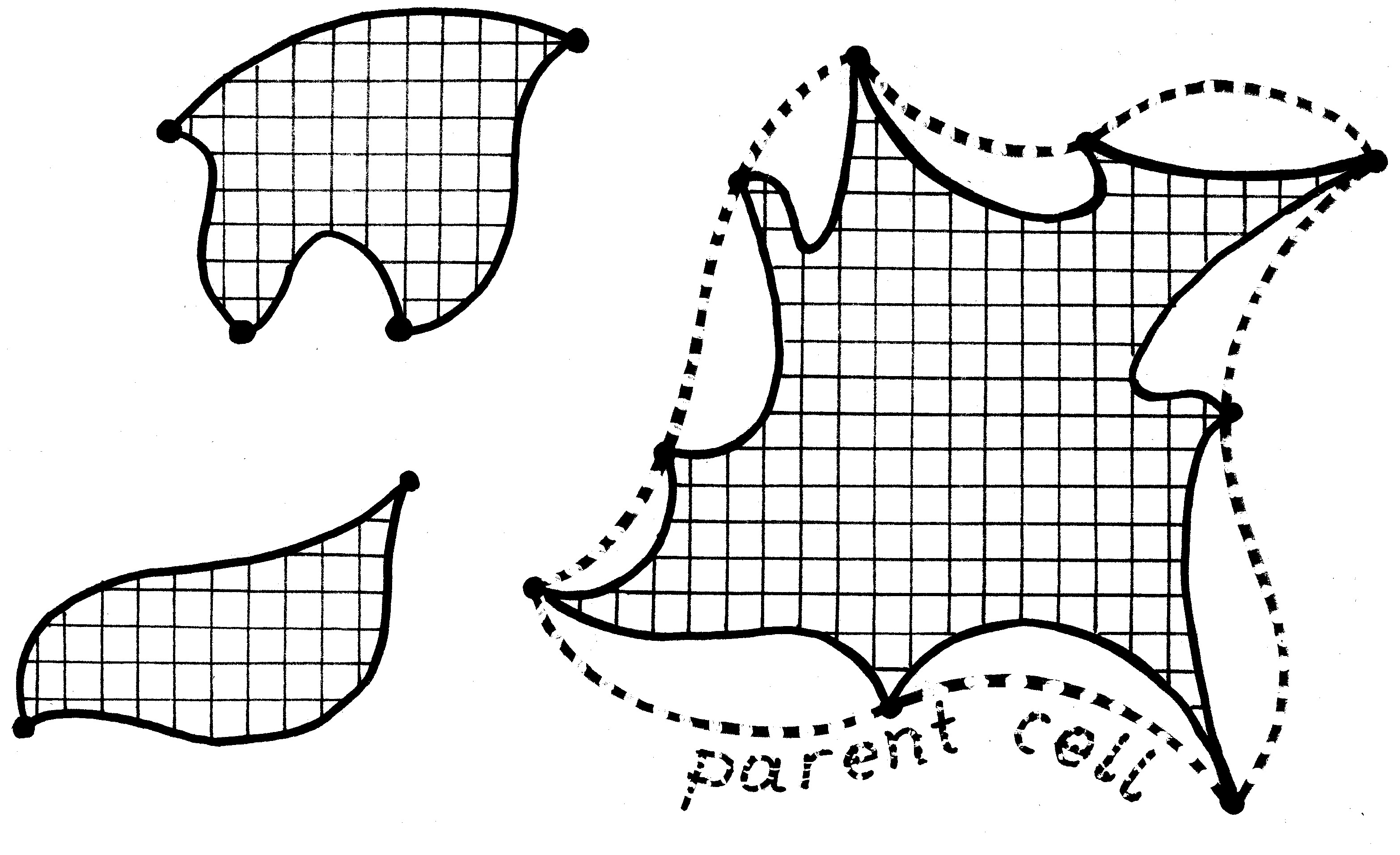}
\caption{Three types of cells.}\label{cells}
\end{figure}
\end{center}

Note that $\hslash_2$ is locally bi-Lipschitz in $\X\setminus \V (\X)$. The exceptional set $\V (\X)$ is discrete.

\section*{Step 3}

We shall now merge all the adjacent cells together, by smoothing $\hslash_2$ around the edges $\Gamma^{\alpha \, \beta} \subset \mathfrak X^\alpha$. To achieve proper estimates we need to remove small neighborhoods of all vertices, outside which $\hslash_2$ is certainly locally bi-Lipschitz.

\subsection*{Step 3a.}  First we cover the set $\mathcal C(\Y)$ of corners of dyadic squares by disks $\mathbb D_c$ centered at $c\in \mathcal C (\Y)$.
These disks will be chosen small enough to satisfy all the conditions listed   below.
\begin{enumerate}[(i)]
\item $\diam \mathbb D_c < \epsilon / 5$ for every $c\in \mathcal C(\Y)$,\vskip0.15cm
\item $\displaystyle \sum_{v\in \V(\X)} \int_{\mathbb F_v}  \abs{\nabla \hslash_2}^p \le \bigg(\frac{\epsilon}{20}\bigg)^p$, where $\mathbb F_v=\hslash_2^{-1}(\mathbb D_c)$, $c=\hslash_2(v)=h(v)$.
\end{enumerate}

Denote by $\X_\circ = \X \setminus \bigcup \overline{\mathbb F_v}$. We truncate each edge $\Gamma^{\alpha \, \beta}$ near the endpoints by setting
\begin{equation}
\Gamma_\circ^{\alpha \, \beta} = \Gamma^{\alpha \, \beta} \cap \X_\circ.
\end{equation}
These are mutually disjoint open arcs; their closures are isolated continua in $\X \setminus \V (\X)$. This means that there are disjoint neighborhoods of them. We are actually interested in neighborhoods $\mathbb U^{\alpha \, \beta} \subset \mathfrak X^\alpha$ of $\Gamma_\circ^{\alpha \, \beta}$ that are Jordan domains in which $\Gamma_\circ^{\alpha \, \beta} \subset \mathbb U^{\alpha \, \beta}$ are $\CC^\infty$-smooth crosscuts with two endpoints in $\partial \mathbb U^{\alpha \, \beta}$, see Section~\ref{prel}. It is geometrically clear that such mutually disjoint neighborhoods exist. Now the stage for next substep is established.

\subsection*{Step 3b.} ($\CC^\infty$-replacement within $\mathbb U^{\alpha \, \beta}$) It is at this stage that we will improve $\hslash_2$ in $\mathbb U^{\alpha \, \beta}$ to a $\CC^\infty$-smooth diffeomorphism with no harm to the previously established estimates for $\hslash_2$. The tool is Proposition~\ref{prop41}. As always, we shall make no replacement of $\hslash_2 \colon \mathbb U^{\alpha \beta} \to \Upsilon^\alpha$ if it is already $\CC^\infty$-diffeomorphism. Recall that we have a bi-Lipschitz mapping $\hslash_2 \colon \mathbb U^{\alpha \, \beta} \to \hslash_2 (\mathfrak X^\alpha)= \Upsilon^\alpha$ that takes the crosscut $\Gamma_\circ^{\alpha \, \beta} \subset \mathbb U^{\alpha \, \beta}$ onto a circular arc. Denote the components $\mathbb U_+^{\alpha \beta} = \mathbb U^{\alpha \beta} \setminus \overline{\mathfrak X^{\alpha \, \beta}}$ and  $\mathbb U_-^{\alpha \beta} = \mathbb U^{\alpha \beta} \cap {\mathfrak X^{\alpha \, \beta}}$. Furthermore, we have
\[\abs{D \hslash_2} \le m_{\alpha \beta} \quad \mbox{ and } \quad \det D \hslash_2 \ge \frac{1}{m_{\alpha \beta}},\quad \text{for some }\ m_{\alpha\beta}>0\]
on each component. The mappings $\hslash_2 \colon \mathbb U_+^{\alpha \beta} \to \Upsilon^\alpha $ and $\hslash_2 \colon \mathbb U_-^{\alpha \beta} \to \Upsilon^\alpha $ are $\CC^\infty$-diffeomorphisms up to $\Gamma^{\alpha \, \beta}_\circ$. In accordance with Proposition~\ref{prop41} we find a constant $M_{\alpha \beta}$ such that: whenever open set $\mathbb V^{\alpha \, \beta} \subset \mathbb U^{\alpha \, \beta}$ contains the crosscut $\Gamma_\circ^{\alpha \, \beta}$ there exists a homeomorphism $\hslash_3^{\alpha \, \beta} \colon \overline{\mathbb U^{\alpha \, \beta}} \onto \hslash_2 (\overline{\mathbb U^{\alpha \, \beta}})$ which is a $\CC^\infty$-diffeomorphism in $\mathbb U^{\alpha \, \beta}$, with the following properties
\begin{itemize}
\item $\hslash_3^{\alpha \, \beta}  \equiv \hslash_2 $ on $(\overline{\mathbb U^{\alpha \, \beta}} \setminus \mathbb V^{\alpha \, \beta})\cup \Gamma_\circ^{\alpha \, \beta}$;
\item $\abs{\nabla \hslash_3^{\alpha \, \beta} } \le M_{\alpha \beta}$  and $\det \nabla \hslash_3^{\alpha \, \beta} \ge \frac{1}{M_{\alpha \beta}}$ in $\mathbb U^{\alpha \, \beta}$.
\end{itemize}
Since $M_{\alpha \beta}$ does not depend on $\mathbb V^{\alpha \, \beta}$ it will be advantageous to take neighborhoods $\mathbb V^{\alpha \, \beta}$ of $\Gamma_\circ^{\alpha \, \beta}$ thin enough to satisfy
\begin{itemize}
\item $\overline{\mathbb V^{\alpha \, \beta}} \subset \mathbb U^{\alpha \, \beta} \cup \overline{\Gamma_\circ^{\alpha \, \beta}  }$;
\item $\abs{\mathbb V^{\alpha \, \beta}} \le \frac{1}{5^p \cdot 2^{\alpha + \beta}} \left[ \frac{\epsilon}{m_{\alpha \beta} + M_{\alpha \beta} }\right]^p$ and also $\abs{\mathbb V^{\alpha \, \beta}} \le \frac{\delta}{2^{\alpha + \beta} M_{\alpha \beta}}$.
\end{itemize}
Note that   $\hslash_3^{\alpha \, \beta}, \hslash_2 \in \W^{1, \infty} (\mathbb U^{\alpha \, \beta}) \subset  \W^{1, p} (\mathbb U^{\alpha \, \beta})$ and   $\hslash_3^{\alpha \, \beta}= \hslash_2 $ on $\partial \mathbb U^{\alpha \, \beta}$, so we have
\[\hslash_3^{\alpha \, \beta} - \hslash_2 \in \W_\circ^{1,p} (\mathbb U^{\alpha \, \beta}).\]

\subsection*{Step 3c.} We now define a homeomorphism $\hslash_3 \colon \X \onto \Y$ by the rule
\[
\hslash_3 = \begin{cases} \hslash_3^{\alpha \, \beta} & \quad \mbox{in } \mathbb U^{\alpha \, \beta}\\
\hslash_2 & \quad \mbox{in } \X \setminus \bigcup\limits_{\alpha \beta} \mathbb U^{\alpha \, \beta}.
\end{cases}
\]
Obviously, $\hslash_3$ is a $\CC^\infty$-diffeomorphism in $\X_\circ$ and $\hslash_3 - \hslash_2 \in \W_\circ^{1,p} (\mathbb X_\circ)$. Since $\hslash_3$ coincides with $\hslash_2$ outside $\X_\circ$ we have $\hslash_3 = \hslash_2 + [\hslash_3 - \hslash_2]_\circ$. Hence
\begin{equation}
\hslash_3 - \hslash_2 \in \mathcal A_\circ (\X). \tag{$A_3$}
\end{equation}
Then, for every $x\in \X$,
\[\abs{\hslash_3(x) - \hslash_2 (x)} \le \begin{cases}\diam \hslash_2 ( \mathbb U^{\alpha \, \beta}), \; & \mbox{for } x\in  \mathbb U^{\alpha \, \beta}\\
0, & \mbox{otherwise} \end{cases} \; \le \diam \Upsilon^\alpha \le \frac{\epsilon}{5} \]
meaning that
\begin{equation}
\norm{\hslash_3 - \hslash_2 }_{\CC (\X)} \le \frac{\epsilon}{5}. \tag{$B_3$}
\end{equation}
The computation of $p$-norms goes as follows
\[
\begin{split}
\norm{\nabla \hslash_3 - \nabla \hslash_2 }^p_{\mathscr L^p (X)} &= \sum_{\alpha \beta} \int_{\mathbb V^{\alpha \, \beta}} \abs{\nabla \hslash_3 - \nabla \hslash_2}^p \\
& \le \sum_{\alpha  \beta} \abs{  \mathbb V^{\alpha \, \beta} } \left[  \norm{\nabla \hslash_3}_{\CC (  \mathbb V^{\alpha \, \beta} )}  +  \norm{\nabla \hslash_2}_{\CC (  \mathbb V^{\alpha \, \beta} )}   \right]^p \\
& \le \sum_{\alpha  \beta} \abs{  \mathbb V^{\alpha \, \beta} } \left(m_{\alpha \beta}   + M_{\alpha \beta} \right)^p \le  \sum_{\alpha \beta} \frac{\epsilon^p}{5^p\, 2^{\alpha + \beta}} \le \left(\frac{\epsilon}{5}\right)^p .
\end{split}
\]
Hence
\begin{equation}
\norm{\nabla \hslash_3 - \nabla \hslash_2}_{\mathscr L^p (X)} \le \frac{\epsilon}{5}. \tag{$C_3$}
\end{equation}
In the finite energy case, when $\norm{\nabla \hslash_2}_{\mathscr L^p (\X)} < \infty $, we observe that
\[\norm{\nabla \hslash_3}_{\mathscr L^p (\X  \setminus \cup\mathbb V^{\alpha \, \beta}   )} = \norm{\nabla \hslash_2}_{\mathscr L^p (\X  \setminus \cup \mathbb V^{\alpha \, \beta}   )} \le \norm{\nabla \hslash_2}_{\mathscr L^p (\X)} .\]
Therefore, by triangle inequality,
\[
\begin{split}  \norm{\nabla \hslash_3}_{\mathscr L^p (\X)} & \le  \norm{\nabla \hslash_2}_{\mathscr L^p (\X)} + \sum_{\alpha  \beta}  \norm{\nabla \hslash_3}_{\mathscr L^p ( \mathbb V^{\alpha \, \beta}   )} \\
& \le  \norm{\nabla \hslash_2}_{\mathscr L^p (\X)} +  \sum_{\alpha  \beta} \abs{  \mathbb V^{\alpha \, \beta} } \cdot \norm{\nabla \hslash_3}_{\CC (\mathbb V^{\alpha \, \beta}) }\\
& \le  \norm{\nabla \hslash_2}_{\mathscr L^p (\X)} + \sum_{\alpha  \beta} \frac{\delta}{2^{\alpha + \beta} M_{\alpha \beta}} \cdot M_{\alpha \beta}
\end{split}
\]
which yields
\begin{equation}
 \norm{\nabla \hslash_3}_{\mathscr L^p (\X)} \le  \norm{\nabla \hslash_2}_{\mathscr L^p (\X)} + \delta. \tag{$D_3$}
\end{equation}
The third step is completed.

\section*{Step 4}

We have already upgraded the mapping $h$ to a homeomorphism $\hslash_3 \colon \X \onto \Y$ that is a $\CC^\infty$-diffeomorphism in $\X_\circ = \X \setminus \bigcup_{v\in \V (\X)} \overline{\mathbb F_v}$, where $\mathbb F_v$ are small surroundings of the vertices of cells. Their images $\hslash_3 (\mathbb F_v)=\hslash_2 (\mathbb F_v)= \mathbb D_c $ are small disks centered at $c=h(v)$. In Step~3a, one of the preconditions on those disks was that $\diam \mathbb D_c < \epsilon /5$. Furthermore, the closed disks $\overline{\mathbb D_c}$ are isolated continua in $\Y$ for all $c\in \mathcal C (\Y)$, so are the sets $\overline{\mathbb F_v}$ in $\X$. We shall now consider slightly larger concentric open disks $\mathbb D_c' \supset \overline{\mathbb D_c}$, $c \in \mathcal C (\Y)$, and their preimages $\mathbb F_v' = h_3^{-1} (\mathbb D_c') \subset \X$, $v=h^{-1}(c) \in \V (\X)$. The annulus $\mathbb D_c' \setminus \overline{\mathbb D_c}$ will be thin enough to ensure that $\mathbb D_c'$ are still disjoint,
\[\diam \mathbb D_c' < \frac{\epsilon}{5} \quad \mbox{ for all } c \in \mathcal C(\Y)\]
and
\[\sum_{v \in \V(\X)} \norm{\nabla \hslash_3}^p_{\mathcal L^p (\mathbb F_v' \setminus \mathbb F_v)} \le \left(\frac{\epsilon}{20}\right)^p.\]

\begin{center}
\begin{figure}[h]
\includegraphics[width=0.6\textwidth]{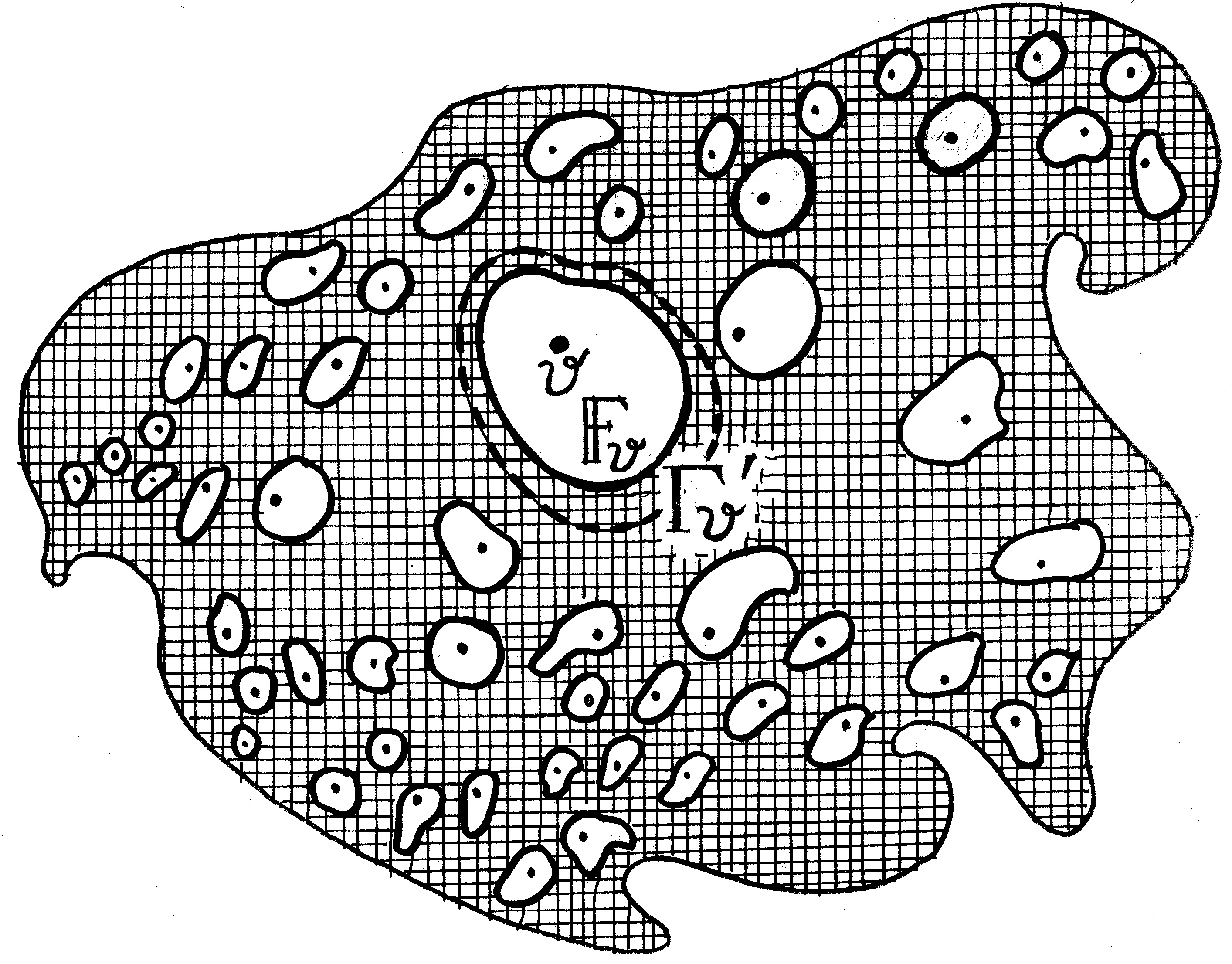}
\caption{Neighborhoods of vertices.}
\end{figure}
\end{center}

Let $\Gamma_v'$, $v\in \V(\X)$, denote the boundary of $\mathbb F_v'$. These are $\CC^\infty$-smooth Jordan curves. We now define a homeomorphism $\hslash_4 \colon \X \onto \Y$ by performing $p$-harmonic replacement of mappings $\hslash_3 \colon \mathbb F'_v \onto \mathbb D_c'$, whenever such a mapping fails to be $\CC^\infty$-diffeomorphism. Thus every  $\hslash_4 \colon \mathbb F'_v \onto \mathbb D_c'$ is a $\CC^\infty$-diffeomorphism up to $\Gamma'_v$. Moreover $\hslash_4 \in \hslash_3 + \W_{\circ}^{1,p} (\mathbb F'_c)$, so
\begin{equation}
\hslash_4 - \hslash_3 \in \mathcal A_\circ (\X). \tag{$A_4$}
\end{equation}
For every $x\in \X$, we have
\[
\abs{\hslash_4 (x)- \hslash_3 (x)} \le \begin{cases} \diam \mathbb D_c' \; & \mbox{ in } \mathbb F_v', \ c =h(v)\\
0 & \mbox{otherwise}
\end{cases} \ \le \frac{\epsilon}{5}.
\]
Hence
\begin{equation}
\norm{\hslash_4- \hslash_3}_{\CC (\X)} \le \frac{\epsilon}{5}. \tag{$B_4$}
\end{equation}
By virtue of the minimum energy principle we compute the $p$-norms
\[
\begin{split}
\norm{\hslash_4- \hslash_3 }^p_{\mathscr L^p (\X)} &= \sum_{v\in \V(\X)} \norm{\hslash_4- \hslash_3 }^p_{\mathscr L^p (\mathbb F_v')} \\
& \le  \sum_{v\in \V(\X)} \left[ \norm{\hslash_4}_{\mathscr L^p (\mathbb F_v')} + \norm{\hslash_3 }_{\mathscr L^p (\mathbb F_v')}   \right]^p \\
& \le 2^p  \sum_{v\in \V(\X)}  \norm{\hslash_3 }^p_{\mathscr L^p (\mathbb F_v')} \\
& \le 2^{2p-1}   \sum_{v\in \V(\X)} \left[  \norm{\hslash_3 }^p_{\mathscr L^p (\mathbb F_v' \setminus \mathbb F_v)} +     \norm{\hslash_3 }^p_{\mathscr L^p (\mathbb F_v)}  \right] \\
& \le 2^{2p-1}   \left[  \left(\frac{\epsilon}{20}\right)^p+      \sum_{v\in \V(\X)} \norm{\hslash_2 }^p_{\mathscr L^p (\mathbb F_v)}  \right] \\
& \le 2^{2p}  \left(\frac{\epsilon}{20}\right)^p = \left(\frac{\epsilon}{5}\right)^p.
\end{split}
\]
Hence
\begin{equation}
\norm{ \hslash_4- \hslash_3 }_{\mathscr L^p (\X)} \le \frac{\epsilon}{5}. \tag{$C_4$}
\end{equation}
Again by minimum energy principle we find that
\begin{equation}
\norm{\hslash_4}^p_{\mathscr L^p (\X)} \le \norm{\hslash_3}^p_{\mathscr L^p (\X)}. \tag{$D_4$}
\end{equation}
Just as in the previous steps, condition $(E_4)$ remains valid, finishing Step 4.

\section*{Step 5}

The final step consists of smoothing $\hslash_4$ in a neighborhood of each smooth Jordan curve $\Gamma_v'$, $v \in \V (\X)$. We argue in much the same way as in Step~3, but this time we appeal to Proposition~\ref{prop42} instead of Proposition~\ref{prop41}. By smoothing $\hslash_4$ in a sufficiently thin neighborhood of each $\Gamma_v'$ we obtain a $\CC^\infty$-diffeomorphism $\hslash_5 \colon \X \onto \Y$,
\begin{equation}
\hslash_5- \hslash_4 \in \mathcal A_\circ (\X). \tag{$A_5$}
\end{equation}
\begin{equation}
\norm{\hslash_5- \hslash_4}_{\CC (\X)} \le \frac{\epsilon}{5}. \tag{$B_5$}
\end{equation}
\begin{equation}
\norm{\hslash_5- \hslash_4}_{\mathscr L^p (\X)} \le \frac{\epsilon}{5}. \tag{$C_5$}
\end{equation}
\begin{equation}
\norm{\hslash_5}_{\mathscr L^p (\X)} \le \norm{\hslash_4}_{\mathscr L^p (\X)}+ \delta. \tag{$D_5$}
\qed
\end{equation}

\section{Open questions}
\begin{questionstar}
 Does Theorem~\ref{thmmain} extend to $n=3$?
 \end{questionstar}
 \begin{questionstar}
A bi-Sobolev homeomorphism $h \colon \X \onto \Y$ is a mapping of class $\W^{1,p}(\X , \Y)$, $1 \le p < \infty$, whose inverse $h^{-1}\colon  \Y \onto \X$  belongs to a Sobolev class  $\W^{1,q}(\Y , \X)$, $1 \le q < \infty$. Can $h$ be approximated by bi-Sobolev diffeomorphisms $\{h_\ell\}$ so that $h_\ell \to h$ in $\W^{1,p}(\X , \Y)$ and $h^{-1}_\ell \to h^{-1}$ in $\W^{1,q}(\Y , \X)$?
\end{questionstar}

\bibliographystyle{amsplain}

\end{document}